\UseAllTwocells \xyoption{frame} \CompileMatrices
\newtheorem{prop}{Proposition}[section]
\newtheorem{lem}[prop]{Lemma}
\newtheorem{thm}[prop]{Theorem}
\newtheorem{rmk}[prop]{Remark}
\newtheorem{defn}[prop]{Definition}
\newcommand{\noprint}[1]{}
\renewcommand{\tilde}{\widetilde}
\newcommand{\Ext}{\mbox{Ext}}
\newcommand{\spf}{\mbox{spf}}
\newcommand{\Hom}{\mbox{Hom}}
\newcommand{\ext}{{\mbox{\tiny Ext}}}
\newcommand{\an}{{\mbox{\tiny an}}}
\newcommand{\virt}{\mbox{\tiny virt}}
\newcommand{\X}{\mathop{\sf X}\nolimits}
\newcommand{\Y}{\mathop{\sf Y}\nolimits}
\newcommand{\XX}{{\mathfrak X}}
\newcommand{\YY}{{\mathfrak Y}}
\newcommand{\UU}{{\mathfrak U}}
\newcommand{\ZZ}{{\mathfrak Z}}
\newcommand{\FF}{{\mathfrak F}}
\newcommand{\MM}{{\mathfrak M}}
\newcommand{\zz}{{\mathbb Z}}
\newcommand{\kk}{{\mathbb K}}
\newcommand{\T}{{\mathbb T}}
\newcommand{\aaa}{{\mathbb A}}
\newcommand{\nn}{{\mathbb N}}
\renewcommand{\ll}{{\mathbb L}}
\newcommand{\pp}{{\mathbb P}}
\newcommand{\cc}{{\mathbb C}}
\newcommand{\rr}{{\mathbb R}}
\newcommand{\lL}{{\mathcal L}}
\newcommand{\ii}{{\mathcal I}}
\newcommand{\sS}{{\mathcal S}}
\newcommand{\pP}{{\mathcal P}}
\newcommand{\oO}{{\mathcal O}}
\newcommand{\sX}{{\mathcal X}}
\newcommand{\sY}{{\mathcal Y}}
\newcommand{\mM}{{\mathcal M}}
\newcommand{\yY}{{\mathcal Y}}
\newcommand{\zZ}{{\mathcal Z}}
\newcommand{\aA}{{\mathcal A}}
\newcommand{\stft}{\mbox{stft}}
\newcommand{\Coh}{\mbox{Coh}}
\newcommand{\A}{\ensuremath{\mathscr{A}}}
\newcommand{\bC}{{\mathbf C}}
\newcommand{\GBSRig}{\text{GBSRig}}
\newcommand{\BSRig}{\text{BSRig}}
\DeclareMathOperator{\MV}{MV}
\DeclareMathOperator{\DP}{DP}
\DeclareMathOperator{\Def}{Def}
\DeclareMathOperator{\RDef}{RDef}
\DeclareMathOperator{\Var}{Var}
\DeclareMathOperator{\loc}{loc}
\DeclareMathOperator{\val}{val}
\DeclareMathOperator{\id}{id}
\DeclareMathOperator{\Crit}{Crit}
\DeclareMathOperator{\St}{St}
\DeclareMathOperator{\Sch}{Sch}
\DeclareMathOperator{\reg}{reg}
\DeclareMathOperator{\ssc}{ssc}
\DeclareMathOperator{\DT}{DT}
\newcommand{\bX}{{\mathbf X}}
\newcommand{\bt}{{\mathbf t}}
\newcommand{\bs}[1]{\boldsymbol{#1}}
\newcommand{\bmM}{\bs{\mathcal{M}}}
\newcommand{\bmX}{\bs{\mathcal{X}}}
\newcommand{\C}{\ensuremath{\mathscr{C}}}
\newcommand{\Gr}{\mathop{\rm Gr}\nolimits}
\newcommand{\ord}{\mathop{\rm ord}\nolimits}
\newcommand{\red}{\mathop{\rm red}\nolimits}
\newcommand{\spec}{\mathop{\rm Spec}\nolimits}
\numberwithin{equation}{subsection}
\newcommand{\sss}{\vspace{5pt} \subsubsection*{ }\refstepcounter{equation}{{\bfseries(\theequation)}\ }}
\def\Label{\label}
\title[Motivic Joyce-Song formula for the Behrend function identities]{On motivic Joyce-Song formula for the Behrend function identities}
\author{Yunfeng Jiang}
\address{Department of Mathematics\\ University of Kansas\\ 405 Snow Hall 1460 Jayhawk Blvd\\Lawrence KS 66045 USA} 
\email{y.jiang@ku.edu}
\keywords{Donaldson-Thomas invariants, the Behrend function, motivic Milnor fiber, motivic Hall algebra, algebraic d-critical locus}        % the keywords
\subjclass[2010]{Primary 14N35; Secondary 53D45}
\begin{document}
\sloppy \maketitle
\begin{abstract}
We prove the version of Joyce-Song formula for the Behrend function identities in the motivic setting. The main method we use is  the proof of Kontsevich-Soibelman conjecture about the motivic Milnor fibers by Q. T.  Le, who uses the method of motivic integration for formal schemes and Cluckers-Loeser's motivic constructible functions.   In the Appendix the motivic formula can be used to  provide a different proof  that there is an algebra homomorphism of Kontsevich-Soibelman from the motivic  Hall algebra of the abelian category of coherent sheaves on a Calabi-Yau threefold $Y$ to the motivic quantum torus of $Y$.  
\end{abstract}

%%% -----------------------------------------------------------------------
\maketitle
%%% ----------------------------------------------------------------------

%\tableofcontents

\section{Introduction}

 \subsection{Background on the Joyce-Song fomula}
\sss Let $Y$ be a smooth Calabi-Yau threefold or a smooth threefold Calabi-Yau Deligne-Mumford stack. The Donaldson-Thomas invariants of $Y$ count stable coherent sheaves on $Y$.  R. Thomas   \cite{Thomas} constructed a perfect obstruction theory $E^\bullet$ in the sense of Li-Tian \cite{LT}, and Behrend-Fantechi \cite{BF} on the moduli space $X$ of stable sheaves over $Y$.   If $X$ is proper, then the virtual dimension of $X$ is zero, and the integral
$\DT_Y=\int_{[X]^{\virt}}1$
is the Donaldson-Thomas invariant of $Y$. In \cite{Behrend} Behrend proves that the moduli scheme $X$ of stable sheaves on $Y$ admits a symmetric obstruction theory which is defined by him in the same paper \cite{Behrend}.  Also Behrend  constructs a canonical integer-valued constructible function 
$$\nu_{X}: X\to\zz$$
on $X$, which we call the {\em Behrend function} of $X$.  More details on the Behrend function is surveyed in \cite{Jiang1}.  If $X$ is proper, then in \cite[Theorem 4.18]{Behrend} Behrend proves that 
$\DT_Y=\int_{[X]^{\virt}}1=\chi(X,\nu_{X})$, 
where $\chi(X,\nu_{X})$ is the weighted Euler characteristic weighted by the Behrend function.  Same result for a proper Deligne-Mumford stack $X$ with a symmetric perfect obstruction theory is conjectured by Behrend in \cite{Behrend}, and  is proved in \cite{Jiang3}.

\sss \label{Joyce-Song:formula} The perfect obstruction theory on the moduli scheme requires that we only can count stable coherent sheaves on $Y$. 
In order to count semi-stable sheaves on the abelian category $\A:=\Coh(Y)$ of coherent sheaves on $Y$,  Joyce-Song in \cite{JS} developed a theory of generalized Donaldson-Thomas invariants.   Let $\MM$ be the moduli stack of coherent sheaves on $\A$, which is an Artin stack  locally of finite type. Then in \cite{JS}, Joyce-Song generalized the definition of the Behrend function to $\MM$: 
$$\nu_{\MM}: \MM\to\zz. $$
We can understand the Behrend function $\nu_{\MM}$ as follows: if there is a finite $1$-morphism
$$f: X\to\MM$$
from a $\kappa$-scheme $X$ to $\MM$, then $f^{*}\nu_{\MM}=(-1)^{n}\nu_{X}$, where $n$ is the relative dimension. 
For any $E_1, E_2\in\Coh(Y)$, Joyce-Song in \cite[\S 5.2]{JS} proves the following formula of the Behrend function identities:
\begin{enumerate}
\item $$\nu_{\MM}(E_1\oplus E_2)=(-1)^{\chi(E_1,E_2)}\nu_{\MM}(E_1)\nu_{\MM}(E_2)\,.$$
Here, $\chi(E_1,E_2)=\sum_i(-1)^i\dim \Ext^i(E_1,E_2)$ is the Euler form.
\item 
$$\int_{F\in \mathbb{P}(\ext^{1}(E_2,E_1))}\nu_{\MM}(F)d\chi-\int_{F\in \mathbb{P}(\ext^{1}(E_1,E_2))}\nu_{\MM}(F)d\chi$$
$$=(\dim(\Ext^{1}(E_2,E_1))-\dim(\Ext^{1}(E_1,E_2)))\nu_{\MM}(E_1\oplus E_2).$$
\end{enumerate}
Here for the integral $\int_{F\in \mathbb{P}(\ext^{1}(E_2,E_1))}\nu_{\MM}(F)d\chi$, we understand it as the weighted Euler characteristic. 
The Formulas (1), (2) are essential to the wall-crossing of Donaldson-Thomas invariants as studied in \cite{JS}, and \cite{Bridgeland11}, since they imply that the morphism from the motivic Hall algebra of $\A$ to the ring of functions of the quantum torus is a Poisson algebra homomorphism. Then the wall-crossing techniques can be applied to get relations between generalized Donaldson-Thomas invariants. 

\sss Let $D^b(\A):=D^b(\Coh(Y))$ be the bounded derived category of coherent sheaves on $Y$. 
An object $E\in D^b(\A)$ is called semi-Schur if $\Ext^{<0}(E,E)=0$. 
It is very interesting to study these formulas for semi-Schur objects in the derived category $D^b(\Coh(Y))$ of coherent sheaves on $Y$.   Note that in \cite{Bussi}
V. Bussi uses the $(-1)$-shifted symplectic structure on the moduli stack $\MM$ of coherent sheaves to prove such Behrend function identities, where her proof relies on the local structure of the moduli stack in \cite{Joyce}. 
In \cite{Jiang2}, we use   Berkovich spaces to prove these formulas.

\subsection{Motivic Joyce-Song formula}

\sss
We follow the proposal of Joyce-Song in \cite{JS} to study the motivic Donaldson-Thomas invariants. 
The moduli stack $\MM$ of objects in $\A$  is an Artin stack,  locally of finite type.  Recall that an  object $E\in \A$ is called   
$semi-Schur$ if it satisfies the condition that 
$\Ext^{<0}(E,E)=0$.  
There is a cyclic dg Lie algebra $R\Hom(E,E)$ corresponding to a semi-Schur object  $E$.  On the cohomology $L_E:=\Ext^*(E,E)$ there is a cyclic $L_\infty$-algebra structure coming from the transfer theorem. 
In \cite{Jiang}, \cite{Jiang2}, we define the Euler characteristic $\chi(E)$ of $E$ by the Euler characteristic of the cyclic 
$L_\infty$-algebra $\Ext^*(E,E)$ or the dg Lie algebra $R\Hom(E,E)$.  Donaldson-Thomas invariants count  stable objects in the derived category and  this Euler characteristic is equal to the pointed Donaldson-Thomas invariant given by the point $E$ in the moduli space.

\sss
If $E$ is  semi-Schur, the cyclic $L_\infty$-algebra $\Ext^*(E,E)$  defines a potential function 
$$f: \Ext^1(E,E)\to\aaa_{\kappa}^1$$
on $\Ext^1(E,E)$, see \cite{Jiang}. In general, $f$ is a formal power series. 

In the case of coherent sheaves,  Joyce-Song prove that $f$ is actually holomorphic, see \cite{JS}. 
For semi-Schur objects, 
Behrend and Getzler, in their unpublished preprint \cite{BG}, proves that $f$ is a  holomorphic function in the complex analytic topology. 
In \cite{Joyce}, Joyce etc use $(-1)$-shifted symplectic structure of \cite{PTVV} on the moduli  space $\MM$ of stable sheaves over smooth Calabi-Yau threefolds to show that the  moduli scheme  locally is given by the critical locus of a regular function $g$.  
The Euler characteristic of the topological Milnor fiber associated with the regular function $g$ gives the pointed Donaldson-Thomas invariant. This regular function may not coincide with the superpotential function $f$ coming from the $L_\infty$-algebra at $E$, but they give the same formal germ moduli scheme $\widehat{\MM}_{E}$ at the point $E$ due to the fact that the germ moduli scheme is the critical locus of the local potential function. 
An argument of this result for coherent sheaves can be found in \cite{FLM},  \cite{FMM}.

\sss \label{analytic:Milnor:fiber}
Let $\kk$ be a non-archimedean complete discretely valued field of characteristic zero.  The ring of integers of $\kk$ is denoted by 
$R$, and the residue field is denoted by $\kappa$.  
Our main example is  $R=\kappa[\![t]\!]$, and the corresponding nonarchimedean field $\kk=\kappa(\!(t)\!)$. 

Associated with the formal potential function $f$, there is a generically smooth {\em special} formal $R$-scheme: 
$$\hat{f}: \XX\to \spf(R),$$
see  \cite{Ber2}, \cite{Nicaise}. 
If $f$ is a regular function, then $(\XX,\hat{f})$ is  the $t$-adic completion of the morphism $f: \Ext^1(E,E)\to\kappa=\spec(\kappa[t])$. 
 The generic fiber 
$\XX_\eta$ is a rigid $\kk$-variety, or a Berkovich space  in sense of \cite{Ber2}.
There exists a specialization map
$$sp: \XX_\eta\to\XX_0$$
from the generic fiber to the reduction $\XX_0$, which is a $\kappa$-variety.  For any 
$y\in\XX_0$, the {\em Analytic Milnor Fiber} $\FF_y(f)$ of $y$ is defined as 
$$\FF_y(f):=sp^{-1}(y).$$
The analytic Milnor fiber $\FF_y(f)$ is an analytic subspace of $\XX_\eta$. 
If we let 
$$\hat{f}_{y}: \XX_y:=\spf(\widehat{\oO}_{\XX,y})\to\spf(R)$$
to be the formal completion of $\XX$ along $y\in\XX_0$, then from \cite{Nicaise} the analytic Milnor fiber 
$\FF_y(f)$ is the generic fiber of the formal scheme $\XX_y$.

\sss
The formal $R$-scheme $(\XX,\hat{f})$ is quasi-excellent in sense of Temkin \cite{Temkin}.  Let 
$$h: \YY\longrightarrow \XX$$
be the resolution of singularities of the formal scheme 
$\XX$. Let $E_i, i\in \ii$ be the set of irreducible components of the exceptional divisors of $h$.  For any 
$I\subset \ii$ let
$$E_{I}:=\bigcap_{i\in I}E_i$$ and 
$$E^{\circ}_{I}:=E_{I}\setminus \bigcup_{j\notin I}E_j.$$
Let $m_{I}=\gcd(m_i)_{i\in I}$, where $m_i$ are the multiplicities of the components $E_i$.  Then there is an Galois cover
$$\widetilde{E}_{I}^{\circ}\to E^{\circ}_{I}$$
with Galois group $\mu_{m_{I}}$.  Hence we get an $\hat{\mu}$-action on $\widetilde{E}_{I}^{\circ}$. 
See \S \ref{Resolution:singularities:formal:scheme} and \cite{Nicaise} for more details on the resolution of singularities.
The following definition is given in  \cite{Jiang}, \cite{Jiang2}. 

\begin{defn}\label{intro:motivic:Milnor:fiber}
The motivic Milnor fiber of the object $E$ is defined as follows:
$$\sS_{0}(E):=\sS_0(\hat{f}):=\sum_{\emptyset\neq I\subset \ii}(1-\mathbb{L})^{|I|-1}[\tilde{E}^{\circ}_{I}\cap h^{-1}(0)].$$
\end{defn}
It is clear that $\sS_{0}(E)\in \mM_{\kappa}^{\hat{\mu}}$.  From \cite{Nicaise}, the motivic volume of the analytic Milnor fiber is given by the motivic Milnor fiber, which  we review in \S \ref{motivic:integration:rigid}. 

Of course, if we have a formal subscheme $\ZZ\subset \XX$, then we define $\sS_{\ZZ}(\hat{f})$ to be the motivic Milnor fiber of 
$\ZZ$: 
$$\sS_{\ZZ}(\hat{f}):=\sum_{\emptyset\neq I\subset \ii}(1-\mathbb{L})^{|I|-1}[\tilde{E}^{\circ}_{I}\cap h^{-1}(\ZZ)].$$

\sss We introduce the following  localized ring of motives:  
$$\mM_{X,\loc}=\mM_{X}[\ll^{-1/2}, (\ll^i-1)^{-1}, i\in\nn_{>0} ]$$ and 
$$\mM^{\hat{\mu}}_{X,\loc}=\mM^{\hat{\mu}}_{X}[\ll^{-1/2}, (\ll^i-1)^{-1}, i\in\nn_{>0} ].$$ 

Let $E_1, E_2, E_1\oplus E_2$ be semi-Schur objects in the derived category of coherent sheaves over $Y$. We prove   the motivic version of Joyce-Song formulas in \cite{Jiang2}.
First we have:
$$\Ext^1(E,E)=\Ext^1(E_1, E_1)\oplus \Ext^1(E_2, E_2)\oplus \Ext^1(E_1, E_2)\oplus \Ext^1(E_2, E_1).$$

\begin{thm}\label{main:thm}
For polynomial potential functions,  in $\mM^{\hat{\mu}}_{\kappa,\loc}$ have the following two formula
\begin{enumerate}

\item $$(1-\mathcal{S}_{((0,0))}(E_1\oplus E_2))=(1-\sS_{0}(E_1))\cdot (1-\sS_{0}(E_2))\,.$$
\item 
\begin{align*}
&\int_{F\in\mathbb{P}(\ext^{1}(E_2,E_1))}(1-\sS_{0}(F))-\int_{F\in\mathbb{P}(\ext^{1}(E_1,E_2))}(1-\sS_{0}(F))\\
&=([\pp^{\dim\ext^{1}(E_2,E_1)}]-[\pp^{\dim\ext^{1}(E_1,E_2)}])\left(1-\sS_{f|_{\ext^1(E_1, E_1)\oplus \ext^1(E_2, E_2)},0}\right).
\end{align*}
\end{enumerate}
where $\int_{\MM_0}(-): \mM^{\hat{\mu}}_{\MM_0}\to \mM^{\hat{\mu}}_{\kappa}$ is the pushforward of motives. 
\end{thm}

\sss We give an explanation about the formulas.  For any semi-Schur object $E\in D^b(\A)$, $\sS_{0}(E)$ is the motivic Milnor fiber of $E$, and $(1-\sS_0(E))$ is the analogue of motivic vanishing cycle.  Let $E:=E_1\oplus E_2$.  Let
$$\phi: \widetilde{\XX}\to \XX:=\widehat{\Ext^{1}(E,E)}\to\spf(R)$$
be the formal blow-up of $\XX$ along
the completion $\YY\subset \XX$, where 
$\YY=\widehat{V}$ is the formal completion of $V$,  and $V:=\Ext^1(E_1,E_1)\oplus \Ext^1(E_2,E_2)\oplus 0\oplus\Ext^1(E_2,E_1)\subset \Ext^1(E,E)$.
We denote by $\ZZ:=\widehat{\Ext^1(E_1, E_2)}\subset \XX$. 
Let $\pp(\ZZ):=\widehat{\pp(\Ext^1(E_1,E_2))}\subset \widetilde{\XX}$ be the closed formal  subscheme of $\widetilde{\XX}$. 
The corresponding reduction scheme is denoted by 
$\pp(\ZZ)_0=\pp(\Ext^1(E_1,E_2))$.
Since the motivic vanishing cycle is constructible,   then
the integration 
$$\int_{F\in\mathbb{P}(\ext^{1}(E_2,E_1))}(1-\sS_{0}(F))$$
can be understood as the motivic cycle
$\sS_{\pp(\ZZ)_0}(\widetilde{\hat{f}})$, where 
$\widetilde{\hat{f}}=\phi\circ \hat{f}: \widetilde{\XX}\to\spf(R)$ is the formal $R$-scheme $\widetilde{\XX}$ of the composition 
$\phi\circ \hat{f}$.

\begin{rmk}
The Euler characteristic of the motivic Milnor fiber $\sS_{0}(E)$ is, plus the correct sign, the value of the Behrend function 
$\nu_{\MM}$ on $E\in\MM$.  Hence taking the Euler characteristic on the formulas (1), (2) in Theorem  \ref{main:thm},  when putting the right signs,  we get the Joyce-Song formula (1), (2) in \S (\ref{Joyce-Song:formula}).
\end{rmk}

\sss Our proof of Theorem \ref{main:thm} is motivated by  Le's  study of Kontsevich-Soibelman Conjecture in \cite{Le}, \cite{Le2}. 
The formula (1) can be taken as a refinement of Le's result, and the second formula (2) is just from the conjecture of Kontsevich-Soibleman directly.   Hence our new result here is only  the formula (1). 

\begin{rmk}
The motivic Joyce-Song formula should be true for any formal potential function.  
In the former version of the paper we use the argument of Le in \cite{Le2}. It seems that the proof there has some issues, and to be safe we only prove the result for polynomial potential functions. 
\end{rmk}

\subsection{The algebra homomorphism}

\sss In the appendix we use  the motivic Joyce-Song formulas to prove an algebra homomorphism from the motivic Hall algebra $H(\A)$ to the motivic quantum torus 
$\overline{\mM}^{\hat{\mu}}_{\kappa,\loc}[\Gamma]$, see Theorem \ref{main:homomorphism}.  In \cite{KS}, Kontsevich-Soibelman shows that there is an algebra homomorphism from the motivic Hall algebra $H(\A)$ to the motivic quantum torus $\overline{\mM}^{\hat{\mu}}_{\kappa,\loc}[\Gamma]$ based on a conjecture of motivic nearby cycles, which is proved by Le \cite{Le}.   Le's result may also implies the algebra homomorphism in Theorem \ref{main:homomorphism}.  Since we use Joyce's $d$-critical locus, we give a new proof for this homomorphism. 

In an old version of this paper we had tried to prove that there is a Poisson algebra homomorphism 
from the motivic Hall algebra $H(\A)$ to the motivic quantum torus 
$\overline{\mM}^{\hat{\mu}}_{\kappa,\loc}$ which generalizes the  
 Lie algebra homomorphism in \cite[Theorem 5.14]{JS}, and the Poisson algebra homomorphism in \cite[Theorem 5.2]{Bridgeland10} to the motivic level.  But it seems it is difficult to have a correct definition of 
 Poisson bracket on the motivic quantum torus 
$\overline{\mM}^{\hat{\mu}}_{\kappa,\loc}$.  We only deal with the algebra homomorphism, which is already proved by Kontsevich-Soibelman in \cite{KS}.

Here the ring $\overline{\mM}^{\hat{\mu}}_{\kappa,\loc}[\Gamma]$ is roughly defined as follows.   The  ring 
$\mM^{\hat{\mu}}_{\kappa,\loc}[\Gamma]$ is a formal power series ring over $\mM^{\hat{\mu}}_{\kappa,\loc}$ generated by symbols $x^\alpha$ for 
$\alpha\in\Gamma$, where $\Gamma$ is the effective classes of the numerical K-group of $Y$.  
The ring $\overline{\mM}^{\hat{\mu}}_{\kappa,\loc}[\Gamma]$ is the quotient of the ring $\mM^{\hat{\mu}}_{\kappa,\loc}[\Gamma]$ modulo the  relations
$$\Upsilon(Q_1\otimes Q_2)-\Upsilon(Q_1)\odot\Upsilon(Q_2)$$
for quadratic forms $Q_1, Q_2$ and $\Upsilon(Q_i)$ are the motive of the quadratic forms for $i=1,2$. 
This is related to the triangle property of the orientation data in \cite{KS} and  have applications to the wall crossing of motivic Donaldson-Thomas invariants.

\sss 
The motivic Hall algebra $H(\A)$ is a $K(\Var_{\MM})[\ll^{-1}]$-module. 
We define a submodule of $H(\A)$ by the elements $[X\to\MM]$ such that 
$X$ is an algebraic $d$-critical locus in the sense of \cite{Joyce}.  We call it the $d$-critical elements of 
$H(\A)$ and denote it by $H_{d-\Crit}(\A)$.  Then let 
$$H_{\ssc, d-\Crit}(\A)=H_{d-\Crit}(\A)/(\ll-1)H_{d-\Crit}(\A).$$ 
We define the integration map 
$$I: H_{\ssc, d-\Crit}(\A)\to \overline{\mM}_{\kappa}^{\hat{\mu}}[\Gamma]$$
by taking the global motivic sheaf $\sS_{X}^{\phi}$ for the algebraic $d$-critical locus $X$. 
By \cite{BBJ}, \cite{BJM}, if the algebraic $d$-critical locus $(X,s)$ has an orientation, which is a root line bundle $K_{X,s}^{1/2}$
for the canonical line bundle $K_{X,s}$, then there exists a global motivic sheaf $\sS_{X}^{\phi}\in \overline{\mM}_{X}^{\hat{\mu}}$, where $\overline{\mM}_{X}^{\hat{\mu}}$ is defined similarly to 
$\overline{\mM}_{\kappa}^{\hat{\mu}}$ by considering the motives of quadratic forms over $X$.   The sheaf  $\sS_{X}^{\phi}$, when restricted to the local critical chart of $X$, is the perverse sheaf of vanishing cycles times the motive of a quadratic form over $X$. 
In this paper we always assume that there exists  an orientation. 
Please see \S \ref{Application:homo} for more details.
The algebra $H_{\ssc, d-\Crit}(\A)$ is called the semi-classical part of the Hall algebra.We prove that the integration map $I$ is an algebra homomorphism.   

\sss We give an explanation on how the formulas in Theorem \ref{main:thm} can be used in the proof of the  algebra homomorphism for the integration map in \S \ref{proof:homomorphism}.  Let $(U,g)$ be a critical chart around  a point $E$ of the algebraic $d$-critical locus 
$X\subset \MM$. The global motive $\sS_{X}^{\phi}\in \overline{\mM}_{X}^{\hat{\mu}}$ is given by the sheaf of vanishing cycles 
$\sS_{U, \hat{g}}^{\phi}=\mathds{1}-\sS_{U,g}\in \overline{\mM}_{X}^{\hat{\mu}}$. 
Let 
$$\hat{g}: \UU\to\spf(R)$$
be the formal completion of $g$ along the origin. 
The nearby cycle $\sS_{U,g}$ can be given by the formally setting nearby cycle $\sS_{\UU,\hat{g}}=\sS_{0}(\hat{g})$, which is defined in 
Definition \ref{intro:motivic:Milnor:fiber}.  The motivic  Milnor fiber $\sS_{0}(\hat{g})$ is isomorphic to the  Milnor fiber 
$\sS_{0}(\hat{f}_{E})$, where $f_{E}: \Ext^1(E,E)\to\cc$ is the superpotential function given by the cyclic $L_\infty$-algebra on $\Ext^*(E,E)$. 
Actually these two formal schemes $(\XX_E, \hat{f}_{E})$ and $(\UU, \hat{g})$ are isomorphic, since they represent the same formal germ moduli scheme $\widehat{\MM}_{E}$.  From  \cite[Theorem 8.8]{Nicaise} and \cite[Theorem 9.4]{Nicaise}, the analytic Milnor fibers $\FF_{0}(\hat{f}_{E})$ and $\FF_0(\hat{g})$ are isomorphic over $\kk$ and their corresponding motivic Milnor fibers $\sS_{0}(\hat{g})$ and 
$\sS_{0}(\hat{f}_{E})$ are isomorphic as motives.  Then the formulas in  Theorem \ref{main:thm} implies that the integration map $I$ is an algebra homomorphism, see \S \ref{proof:homomorphism}.

\subsection{Outline}

\sss The outline of the paper is as follows.  The materials about motivic integration are reviewed in \S \ref{motivic:integration}, where in \S \ref{Grothendieck:ring} we review the Grothendieck ring of varieties, and 
in \S \ref{motivic:integration:rigid} we briefly talk about the motivic integration of rigid varieties from formal scheme models following \cite{Nicaise}.  In \S \ref{proof:conjecture} we prove Theorem \ref{main:thm}.  
In \S \ref{CL:motivic:constructible} we review the techniques of motivic constructible functions in \cite{CL};  in \S \ref{proof:formula1} we prove Formula (1) in Theorem \ref{main:thm}; and in \S \ref{proof:formula2} we prove Formula (2) in Theorem \ref{main:thm}. 
Combining sections \S \ref{proof:formula1} and \S \ref{proof:formula2}, Theorem \ref{main:thm} is proved. 
Section \S  \ref{Application:homo} serves as the proof of the algebra homomorphism from the motivic Hall algebra to the motivic quantum torus, where in \S \ref{sec:motivic:Hall} we introduce the motivic Hall algebra 
$H(\A)$ for the abelian category $\A$; in \S \ref{sec:algebraic:d:critical:locus} we briefly review the notion of algebraic $d$-critical locus of Joyce in \cite{Joyce};  in \S \ref{motivic:integration:map} we define the integration map; and in \S \ref{proof:homomorphism} we prove that  the integration map is an algebra homomorphism.

\subsection*{Convention}

Throughout the paper we work over an  algebraically closed field $\kappa$ so that the nonarchimedean field is 
$\kappa(\!(t)\!)$ and its ring of integers is $R=\kappa[\![t]\!]$.  For the applications in \S \ref{Application:homo}, we consider the schemes and stacks over $\kappa=\cc$, the field of complex numbers. 

For a Berkovich analytic space $\FF$,  we use  $\chi(\FF)$ to represent the Euler characteristics the \'etale cohomology of $\FF$. 
We use $\ll$ to represent the Lefschetz motive $[\aaa^1_{\kappa}]$. 
%%% ----------------------------------------------------------------------
\subsection*{Acknowledgments}

The author would like to thank Kai Behrend, Johannes Nicaise, Sam Payne and Andrew MacPherson  for valuable discussions on Berkovich spaces,  especially Johannes Nicaise for answering the questions about the motivic integration of formal schemes in \cite{Nicaise}.  Many thanks to Andrea Ricolfi and J$\o$rgen Rennemo for his interests and valuable discussions. 
He also thanks  Professor Tom Bridgeland for the discussion of the integration map in his proof of the DT/PT-correspondence, 
Professor Dominic Joyce for the  discussion of the Joyce-Song formula for the Behrend function identities, and  Professors Tom Coates, Richard Thomas and Alessio Corti for support at Imperial College London.
This work is partially supported by  Simons Foundation Collaboration Grant 311837, and NSF Grant DMS-1600997.

%%%----------------------------------------------------------------------
\section{The motivic Milnor fibre and the motivic volumes.}\label{motivic:integration}

\subsection{Grothendieck group of varieties.}\label{Grothendieck:ring}

\sss In this section we briefly review the Grothendieck group of varieties. 
Let $S$ be an algebraic variety over $\kappa$. Let $\Var_{S}$ be the category of 
$S$-varieties.

\sss Let $K_0(\Var_{S})$ be the Grothendieck group of $S$-varieties.  By definition $K_0(\Var_{S})$ 
is an abelian group with generators given by all the varieties $[X]$'s, where $X\rightarrow S$ are $S$-varieties,  and the relations are $[X]=[Y]$, if $X$ is isomorphic to $Y$, and 
$[X]=[Y]+[X\setminus Y]$ if $Y$ is a Zariski closed subvariety of $X$.
Let $[X],  [Y]\in K_0(\Var_{S})$,  and define $[X][Y]=[X\times_{S} Y]$.  Then 
we have a product on $K_0(\Var_{S})$. 
Let $\mathbb{L}$ represent the class of $[\mathbb{A}_{\kappa}^{1}\times S]$.
Let $\mathcal{M}_{S}=K_0(\Var_{S})[\mathbb{L}^{-1}]$
be the ring by inverting the class $\mathbb{L}$ in the ring $K_0(\Var_{S})$.

If $S$ is a point $\spec (\kappa)$, we write $K_0(\Var_{\kappa})$ for the Grothendieck ring of $\kappa$-varieties.
One can take the map $\Var_{\kappa}\longrightarrow K_0(\Var_{\kappa})$ to be the universal Euler characteristic.
After inverting the class $\mathbb{L}=[\mathbb{A}_{\kappa}^{1}]$, we get the ring $\mathcal{M}_{\kappa}$.

\sss We introduce the equivariant Grothendieck group defined in \cite{DeLo1}.
Let $\mu_n$ be the cyclic group of order $n$, which can be taken as the algebraic variety
$\spec (\kappa[x]/(x^n-1))$. Let $\mu_{md}\longrightarrow \mu_{n}$ be the map $x\mapsto x^{d}$. Then 
all the groups $\mu_{n}$ form a projective system. Let 
$$\underleftarrow{lim}_{n}\mu_{n}$$
be the direct limit.

Suppose that $X$ is a $S$-variety. The action $\mu_{n}\times X\longrightarrow X$ is called a $good$ 
action if  each orbit is contained in an affine subvariety of $X$.  A good $\hat{\mu}$-action on $X$ is an action of $\hat{\mu}$ which factors through a good $\mu_n$-action for some $n$.

The $equivariant ~Grothendieck~ group$ $K^{\hat{\mu}}_0(\Var_{S})$ is defined as follows:
The generators are $S$-varieties $[X]$ with a good $\hat{\mu}$-action; and the relations are:
$[X,\hat{\mu}]=[Y,\hat{\mu}]$ if $X$ is isomorphic to $Y$ as $\hat{\mu}$-equivariant $S$-varieties,  
and $[X,\hat{\mu}]=[Y,\hat{\mu}]+[X\setminus Y, \hat{\mu}]$ if $Y$ is a Zariski closed subvariety
of $X$ with the $\hat{\mu}$-action induced from that on $X$,  if $V$ is an affine variety with a good 
$\hat{\mu}$-action, then $[X\times V,\hat{\mu}]=[X\times \mathbb{A}_{\kappa}^{n},\hat{\mu}]$.  The group 
$K^{\hat{\mu}}_0(\Var_{S})$ has a ring structure if we define the product as the fibre product with the good $\hat{\mu}$-action.  Still we let $\mathbb{L}$  represent the class $[S\times \mathbb{A}_{\kappa}^{1},\hat{\mu}]$ and let $\mathcal{M}_{S}^{\hat{\mu}}=K^{\hat{\mu}}_0(\Var_{S})[\mathbb{L}^{-1}]$ be the ring obtained from $K^{\hat{\mu}}_0(\Var_{S})$ by inverting the class $\mathbb{L}$.

If $S=\spec(\kappa)$, then we write $K^{\hat{\mu}}_0(\Var_{S})$ as $K^{\hat{\mu}}_0(\Var_{\kappa})$, and $\mathcal{M}_{S}^{\hat{\mu}}$ as $\mathcal{M}_{\kappa}^{\hat{\mu}}$.  Let  $s\in S$ be a geometric point. Then we have natural maps $K^{\hat{\mu}}_0(\Var_{S})\longrightarrow K^{\hat{\mu}}_0(\Var_{\kappa})$ and $\mathcal{M}_{S}^{\hat{\mu}}\longrightarrow \mathcal{M}_{\kappa}^{\hat{\mu}}$ given by the correspondence
$[X,\hat{\mu}]\mapsto [X_s,\hat{\mu}]$.

\sss 
Let $S$ be a scheme.  Following \cite{BJM}, we need to define a new product $\odot$ on $\mM_{S}^{\hat{\mu}}$.
The following definition is due to \cite[Definition 2.3]{BJM}. 
\begin{defn}
Let $[X, \widehat{\sigma}], [Y,\widehat{\tau}]$ be two elements in $K_0^{\hat{\mu}}(\Var_{S})$ or $\mM_{S}^{\hat{\mu}}$. 
Then there exists $n\geq 1$ such that the $\hat{\mu}$-actions $\widehat{\sigma}, \widehat{\tau}$ on $X,Y$ factor through $\mu_n$-actions
$\sigma_n, \tau_n$.  Define $J_n$ to be the Fermat curve
$$J_n=\{(t,u)\in (\aaa^1\setminus \{0\})^2: t^n+u^n=1\}.$$
Let $\mu_n\times\mu_n$ act on $J_n\times(X\times_{S}Y)$ by
$$(\alpha,\alpha^\prime)\cdot ((t,u),(v,w))=((\alpha\cdot t, \alpha^\prime\cdot u), (\sigma_n(\alpha)(v), \tau_n(\alpha^\prime)(w))).$$
Write $J_n(X,Y)=(J_n\times (X\times_{S}Y))/(\mu_n\times\mu_n)$ for the quotient $\kappa$-scheme, and 
define a $\mu_n$-action $v_n$ on $J_n(X,Y)$ by
$$v_n(\alpha)((t,u), v,w)(\mu_n\times\mu_n)=((\alpha\cdot t, \alpha\cdot u),v,w)(\mu_n\times\mu_n).$$
Let $\hat{v}$ be the induced good $\hat{\mu}$-action on $J_n(X,Y)$, and set
$$[X, \widehat{\sigma}]\odot [Y,\widehat{\tau}]=(\ll-1)\cdot [(X\times_{S}Y/\mu_n, \hat{\iota})]-[J_n(X,Y),\hat{v}]$$
in  $K_0^{\hat{\mu}}(\Var_{S})$ or $\mM_{S}^{\hat{\mu}}$. This defines a commutative, associative product on  $K_0^{\hat{\mu}}(\Var_{S})$ or $\mM_{S}^{\hat{\mu}}$.
\end{defn}

Consider the Lefschetz motive $\ll=[\aaa^1_{\kappa}]$. As in \cite{BJM}, we define 
$\ll^{\frac{1}{2}}$ in $K_0^{\hat{\mu}}(\Var_{S})$ or $\mM_{S}^{\hat{\mu}}$ by:
$$\ll^{\frac{1}{2}}=[S,\hat{\iota}]-[S\times\mu_2,\hat{\rho}],$$
where $[S,\hat{\iota}]$ with trivial $\hat{\mu}$-action $\hat{\iota}$ is the identity in  $K_0^{\hat{\mu}}(\Var_{S})$ or $\mM_{S}^{\hat{\mu}}$,
and $S\times\mu_2$ is the two copies of $S$ with the nontrivial $\hat{\mu}$-action $\hat{\rho}$ induced by the left action of $\mu_2$ on itself, exchanging the two copies of $S$.  Then $\ll^{\frac{1}{2}}\odot\ll^{\frac{1}{2}}=\ll$.

\subsection{Motivic integration on rigid varieties}\label{motivic:integration:rigid}

\sss Let $\kk$ be a non-archimedean complete discretely valued field of characteristic zero.  The ring of integers of $\kk$ is denoted by 
$R$, and the residue field is denoted by $\kappa$.  For instance, $R=\kappa[\![t]\!]$ and $\kk=\kappa(\!(t)\!)$ is the fractional field of $R$.  We fix a unformizing paremeter $\pi$ in $R$ throughout the paper, and in the case that $R=\kappa[\![t]\!]$, $\pi=t$. 

\sss Let $\XX\to \spf(R)$ be a  separated generically smooth  formal scheme over $R$ of topologically of finite type.  We call such types of  $R$-formal schemes $stft$ $R$-formal schemes.  A $stft$ $R$-formal scheme $\XX$ is obtained by gluing finite open covers by affine $stft$ formal $R$-schemes.  Each affine  $stft$ formal $R$-scheme is of the form 
$$\spf(A)\to \spf(R)$$
for a topologically of finite type $R$-algebra $A$, which is isomorphic to an algebra of the form 
$R\{x_1,\cdots,x_m\}/I$ for some integer $m>0$ and some ideal $I$, where $R\{x_1,\cdots,x_m\}$ is the algebra of converging power series over $R$.  

The $special~ fiber$ $\XX_0$ for an affine  $stft$ formal $R$-scheme $\XX=\spf(A)$ is the $\kappa$-scheme 
$\XX_0=\spec(A_0)$, where $A_0$ is the $\kappa$-algebra $A/J$ with $J$ the largest ideal of definition.  In general the affine covers of $\spec(A_0)$ glue to give the $\kappa$-scheme $\XX_0$ for any $stft$ formal $R$-scheme $\XX$.

\sss Let $\XX$ be a generically smooth $stft$ formal $R$-scheme. The generic fiber $\XX_\eta$ is rigid $\kk$-variety. The construction is obtained by gluing the constructions on affine charts.  If $\XX=\spf(A)$ is affine, recall that from \cite[\S 4.8, \S 4.9]{Nicaise-survey}, there is a specialization map
$$sp: |\XX_\eta|\to |\XX|=|\XX_0|$$
such that if $\UU$ is any open formal subscheme of $\XX$, then $sp^{-1}(\UU)$ is an admissible open in $\XX_\eta$.  Thus the generic fibers  $\UU_i$ of an affine open covers of a $stft$ formal $R$-scheme $\XX$ can be glued along the generic fibers of the intersections
$\UU_i\cap \UU_j$ to obtain a rigid $\kk$-variety $\XX_\eta$. The specialization maps glue to give a continuous map
\begin{equation}
sp: |\XX_\eta|\to |\XX|=|\XX_0|.
\end{equation} 

In the language of Berkovich analytic spaces, the analytification of $\XX_\eta$ is a Berkovich analytic space over the nonarchimedean field $\kk$ in sense of Berkovich \cite{Ber}.  We still denote it by $\XX_\eta$.  The construction is also obtained by gluing the constructions on affine charts.  For $\XX=\spf(A)$ is affine, let $\aA=A\otimes_{R}\kk$ then $\XX_\eta=\mM(\aA)$ is the  spectrum of the affinoid $\kk$-algebra
$\aA$, which consists of all bounded multiplicative semi-norms 
$x: \aA\to \rr_{+}$.  The affine Berkovich spaces $\mM(\aA)$ glue to give  us the Berkovich analytic space $\XX_\eta$.

Finally we recall the following result in \cite{Nicaise-survey}.  The construction of the generic fiber is functorial.  A morphism of $stft$ formal 
$R$-schemes $h: \YY\to \XX$ induces a morphism of rigid $\kk$-varieties 
$h_\eta: \YY_\eta\to\XX_\eta$ and the square
\begin{equation}\label{commutative-specialization-map}
\xymatrix{
\YY_\eta\ar[r]^{h_\eta}\ar[d]_{sp}& \XX_\eta\ar[d]^{sp}\\
\YY\ar[r]^{h}&\XX
}
\end{equation}
commutes.  Thus there is a functor
$$(\cdot)_{\eta}:  (\stft-For/R)\to (sqc-Rig/\kk): \XX\mapsto \XX_\eta$$
from the category of $\stft$ formal $R$-schemes to the category of separated, quasi-compact rigid $\kk$-varieties. 

\sss  Let $\XX$ be a generically smooth $\stft$ formal $R$-scheme. We follow the construction of Nicaise-Sebag, Nicaise in \cite{NS}, \cite{Nicaise} for the definition of the motivic integration of a gauge form $\omega$ on $\XX_\eta$, which takes values in $\mM_{\XX_0}$. 

We briefly recall the method to define the motivic integration $\int_{\XX}|\omega|$. 
First we have
$$\XX=\varinjlim_{m}\sX_m,$$
where $\sX_m=(\XX, \oO_{\XX}\otimes_{R}R_m)$ and $R_m=R/(\pi)^{m+1}$. In Greenberg \cite{Greenberg}, the functor
$$\yY\mapsto \Hom_{R_m}(\yY\times_{\kappa}R_m, \sX_m)$$
from the category of $\kappa$-schemes to the category of sets is presented by a $\kappa$-scheme 
$$\Gr_{m}(\sX_m)$$
of finite type such that
$$\Gr_m(\sX_m)(A)=\sX_m(A\otimes_{\kappa}R_m)$$ for any $\kappa$-algebra $A$.  The projective limit 
$\varinjlim_{m}\sX_m$ is denoted by $\Gr(\XX)$.  The functor $\Gr$ respects open and closed immersions and fiber products, 
and sends affine topologically of finite type formal $R$-schemes to affine $\kappa$-schemes. The motivic integration of a gauge form 
$\omega$ is defined by using the stable cylindrical subsets of $\Gr(\XX)$, introduced by Loeser-Sebag in \cite{LS}, and Nicaise-Sebag in \cite{NS-curve}. 

Let $\bC_{0,\XX}$ be the set of stable cylindrical subsets of $\Gr(\XX)$ of some level.  If $A\subset \bC_{0,\XX}$ is a cylinder, and we have a function
$$\alpha: A\to \zz\cup\{\infty\}$$
such that $\alpha^{-1}(m)\subset \bC_{0,\XX}$. Then 
$$\int_{A}[\aaa_{\XX_0}^{1}]^{-\alpha}d\widetilde{\mu}:=\sum_{m\in\zz}\widetilde{\mu}(\alpha^{-1}(m))\cdot [\aaa^{1}_{\XX_0}]^{-m},$$
where $$\widetilde{\mu}: \bC_{0,\XX}\to \mM_{\XX_0}$$
is the unique additive morphism defined in \cite[Proposition 5.1]{Le} by
$$\widetilde{\mu}(A)=[\pi_m(A)]\cdot [\aaa^{1}_{\XX_0}]^{-(m+1)d}$$
for $A$ a stable cylinder of level $m$,  $d$ is the relative dimension of $\XX$, and $\pi_m: \Gr(\XX)\to \Gr(\sX_m)$
is the canonical projection. 

Let $\omega$ be a gauge form on $\XX_\eta$, in \cite{LS}, the authors constructed an integer-valued function
$$\ord_{\pi, \XX}(\omega)$$
on $\Gr(\XX)$ that takes the role of $\alpha$ before.  The motivic integration $\int_{\XX}|\omega|$ is defined to be
\begin{equation}\label{defn:motivic:integration}
\int_{\XX}|\omega|:=\int_{\Gr(\XX)}[\aaa_{\XX_0}^{1}]^{-\ord_{\pi,\XX}(\omega)}d\widetilde{\mu}\in \mM_{\XX_0}.
\end{equation}
From \cite{LS}, \cite{NS}, the forgetful map
$$\int: \mM_{\XX_0}\to \mM_{\kappa}$$ defined by
$$\int_{\XX}|\omega|\mapsto \int_{\XX_\eta}|\omega|:=\int \int_{\XX}|\omega|$$
only depends on $\XX_\eta$, not on $\XX$.

\sss In \cite{Nicaise} Nicaise generalizes the motivic integration construction to generically smooth special formal $R$-schemes.  A special formal 
$R$-scheme $\XX$ is a separated Noetherian adic formal scheme endowed with a structural morphism $\XX\to \spf(R)$, such that $\XX$ is a finite union of open formal subschemes which are formal spectra of special $R$-algebras.   From Berkovich \cite{Ber2}, a topological $R$-algebra $A$ is special, iff $A$ is topologically $R$-isomorphic to a quotient of the special $R$-algebra
$$R\{T_1,\cdots, T_m\}[\![S_1,\cdots,S_n]\!]=R[\![S_1,\cdots,S_n]\!]\{T_1,\cdots, T_m\}.$$

The Noetherian adic formal scheme $\XX$ has the largest ideal of definition $J$. The closed subscheme of $\XX$ defined by $J$ is denoted by 
$\XX_0$, which is a reduced Noetherian $\kappa$-scheme.

\sss We briefly review the motivic integration of Nicaise in \cite{Nicaise}. 

\begin{defn}
Let $\XX$ be a special formal $R$-scheme. By a \textbf{N\'eron smoothening} we mean a morphism of special formal 
$R$-schemes $\YY\to\XX$, such that $\YY$ is adic smooth over $R$ and $\YY_\eta\to\XX_\eta$ is an open embedding satisfying 
$\YY_\eta(\widetilde{K})=\XX_\eta(\widetilde{\kk})$ for any finite unramified extension $\widetilde{\kk}$ of $\kk$. 
\end{defn}

In \cite[\S 2]{Nicaise}, Nicaise proves that a N\'eron smoothening of $\XX$ exists and is given by the dilatation of $\XX$.  Then $\YY$ is a 
$\stft$ formal $R$-scheme. 

\begin{defn}
Let $\XX$ be a generically smooth special formal $R$-scheme.  We define
$$\int_{\XX}|\omega|:=\int_{\YY}|\omega|$$ and 
$$\int_{\XX_\eta}|\omega|:=\int_{\YY_\eta}|\omega|$$
for a gauge form $\omega$ on $\XX_\eta$. 
\end{defn}

\sss We recall the motivic volume of $\XX_\eta$ in \cite{Nicaise}.  
For $m\geq 1$, let 
$\kk(m):=\kk[T]/(T^m-\pi)$ be a totally ramified extension of degree $m$ of $\kk$, and 
$R(m):=R[T]/(T^m-\pi)$ the normalization of $R$ in $\kk(m)$.  If $\XX$ is a formal $R$-scheme, we define 
$$\XX(m):=\XX\times_{R}R(m)$$ and
$$\XX_\eta(m):=\XX_\eta\times_{\kk}\kk(m).$$
If $\omega$ is a gauge form on $\XX_\eta$, we denote by $\omega(m)$ the pullback of $\omega$
via the natural morphism $\XX_\eta(m)\to \XX_\eta$. 

\begin{defn}\label{defn_Poincare_series}
Let $\XX$ be a generically smooth special formal $R$-scheme. Let 
$\omega$ be a gauge form on $\XX_\eta$.  Then the volume Poincar\'e series of $(\XX,\omega)$ is defined to be
$$S(\XX,\omega;T):=\sum_{d>0}\left(\int_{\XX(d)}|\omega(d)|\right)T^d\in \mM_{\XX_0}[\![T]\!].$$
\end{defn}

\sss \label{Resolution:singularities:formal:scheme}
\begin{defn}
Let $\XX$ be a generically smooth flat $R$-formal scheme.  A resolution of singularities of $\XX$ is a proper morphism 
$h: \YY\to \XX$ of flat special formal $R$-schemes such that  
$h$ induces an isomorphism on generic fibers, and such that $\YY$ is regular (meaning the local ring at points is regular), with a special fiber 
a strict normal crossing divisor $\YY_s$.  We say that the resolution $h$ is tame if $\YY_s$ is a tame normal crossing divisor. 
\end{defn}

By Temkin's resolution of singularities for quasi-excellent schemes of characteristic zero in  \cite{Temkin},   any affine generically smooth flat special formal $R$-scheme $\XX=\spf(A)$ admits a resolution of singularities by means of admissible blow-ups.   

In general for any generically smooth $R$-formal scheme $\XX$, suppose that there is a resolution of singularities 
\begin{equation}
h:  \YY\longrightarrow \XX
\end{equation}

Let $E_i$, $i\in \ii$, be the set of irreducible components of the exceptional divisors of the resolution. 
For $I\subset \ii$,  we set 
$$E_{I}:=\bigcap_{i\in I}E_{i}$$
and 
$$E_{I}^{\circ}:=E_{I}\setminus \bigcup_{j\notin I}E_j.$$
Let $m_{i}$ be the multiplicity of the component $E_i$, which means that 
the special fiber of the resolution is 
$$\sum_{i\in \ii}m_iE_i.$$
Let $m_{I}=\gcd(m_i)_{i\in I}$. Let $U$ be an affine Zariski open subset of $\YY$, such that, 
on $U$, $f\circ h=uv^{m_{I}}$, with $u$ a unit in $U$ and $v$ a morphism from 
$U$ to $\mathbb{A}_{\cc}^{1}$. The restriction of $E_{I}^{\circ}\cap U$, which we denote by
$\tilde{E}_{I}^{\circ}\cap U$, is defined by
$$\lbrace{(z,y)\in \mathbb{A}_{\cc}^{1}\times (E_{I}^{\circ}\cap U)| z^{m_{I}}=u^{-1}\rbrace}.$$
The $E_{I}^{\circ}$ can be covered by the open subsets $U$ of $Y$.  We can glue together all such 
constructions and get the Galois cover
$$\tilde{E}_{I}^{\circ}\longrightarrow E_{I}^{\circ}$$
with Galois group $\mu_{m_{I}}$.
Remember that $\hat{\mu}=\underleftarrow{lim} \mu_{n}$ is the direct limit of the groups
$\mu_{n}$. Then there is a natural $\hat{\mu}$ action on $\tilde{E}_{I}^{\circ}$.
Thus we get 
$[\tilde{E}_{I}^{\circ}]\in \mathcal{M}_{X_0}^{\hat{\mu}}$.

\sss\label{motivic:result:formal:scheme} Using resolution of singularities, in \cite[Theorem 7.12]{Nicaise}, Nicaise proves the following result:

\begin{thm}\label{motivic:integration:formula:omegam}
Let $\XX$ be a generically smooth special formal $R$-scheme of pure relative dimension $d$. 
Then we have a structural morphism $f: \XX\to \spf(R)$. 
Suppose that $\XX$ has a resolution of singularities $\XX^\prime\to\XX$ with special fiber 
$\XX^\prime_s=\sum_{i\in I}N_i E_i$. 

Let $\omega$ be a $\XX$-bounded gauge form on $\XX_\eta$, where the definition of bounded gauge form is given by Nicaise in \cite[Definition 2.11]{Nicaise}. Then
for any integer $m>0$, 
$$\int_{\XX(m)}|\omega(m)|=\ll^{-d}\sum_{\emptyset\neq J\subset \ii}(\ll-1)^{|J|-1}[\tilde{E}_{J}^{\circ}]\left(\sum_{\substack{k_i\geq 1, i\in J\\
\sum_{i\in J}k_i N_i=d}}\ll^{-\sum_{i}k_i\mu_i}\right)\in \mM_{\XX_0}^{\mu_m}.$$
\end{thm}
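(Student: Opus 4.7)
The plan is to proceed by combining resolution of singularities for special formal schemes with the change of variables formula for motivic integration, reducing the computation of $\int_{\XX(m)}|\omega(m)|$ to an explicit local calculation on the resolution. First, since $\XX$ is a generically smooth special formal $R$-scheme, its base change $\XX(m)$ is also special and generically smooth. By the definition of $\int_{\XX(m)}|\omega(m)|$ via a N\'eron smoothening, we may replace $\XX(m)$ by the base change $\XX^\prime(m)$ of the resolution, since $\XX^\prime(m)\to \XX(m)$ induces an isomorphism on generic fibers and any dilatation of $\XX^\prime(m)$ serves as a smoothening. This reduces the problem to computing the motivic integral on the strict normal crossing model $\XX^\prime(m)$, where all subsequent analysis takes place.

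Next, I would stratify the special fiber $\XX^\prime_0$ by the locally closed pieces $E_J^\circ$ for $\emptyset\neq J\subset\ii$ and argue chart-locally. Around a point of $E_J^\circ$, the morphism $\XX^\prime\to\spf(R)$ has local coordinates in which
\[
\pi = u\prod_{i\in J} v_i^{N_i},
\]
with $u$ a unit and $v_i$ local equations of $E_i$. Passing to $\XX^\prime(m)$, the uniformizer $T$ of $R(m)$ satisfies $T^m = u\prod v_i^{N_i}$, and one stratifies the truncated Greenberg scheme $\Gr_n(\XX^\prime(m))$ by the orders of vanishing $(k_i)_{i\in J}$ of each $v_i$ along an arc. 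The constraint $\sum_{i\in J}k_i N_i=d$ comes from matching $\ord T = d$ on the arc (interpreted appropriately after the base change), which cuts out the index set in the sum.

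On each stratum, I would compute the contribution to $\widetilde{\mu}$ using Nicaise's formula (his Proposition~5.1): the fiber over a point of $E_J^\circ$ with fixed orders $(k_i)$ is an affine bundle whose fiberwise class involves a torus $(\Gm)^{|J|-1}$, producing the factor $(\ll-1)^{|J|-1}$, while the unit $u$ forces passage to the Galois cover $\widetilde{E}_J^\circ\to E_J^\circ$ of group $\mu_{m_J}$ described in \S\ref{Resolution:singularities:formal:scheme}, supplying the class $[\widetilde{E}_J^\circ]\in \mM^{\hat\mu}_{\XX_0}$. The $\XX$-boundedness hypothesis on $\omega$ guarantees that $\ord_{\pi,\XX^\prime}(\omega)$ is finite and locally constant along each stratum; writing $\ord_{\pi,\XX^\prime}(\omega)=\sum_i\mu_i\cdot(\text{order along }E_i)$ for the multiplicities $\mu_i$ appearing in the local trivialisation of $\omega$ produces the factor $\ll^{-\sum_i k_i\mu_i}$ inside the sum, and the factor $\ll^{-d}$ out front encodes the relative dimension normalisation $[\aaa^1_{\XX_0}]^{-(n+1)d}$ from the definition of $\widetilde{\mu}$.

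Finally, I would sum the contributions over all $J$ and all admissible $(k_i)$ to obtain the right hand side. The main obstacle I expect is the careful tracking of the $\hat{\mu}$-equivariant structure and the compatibility of the local Galois covers across different charts of $\XX^\prime$: one has to verify that the local $\mu_{m_J}$-coverings (obtained by adjoining $m_J$-th roots of the local units $u$) glue to the global $\widetilde{E}_J^\circ$, and that after the base change $R\subset R(m)$ the resulting action factors through the asserted $\mu_m$-action. This is the point at which the passage from the $\stft$ setting of Nicaise-Sebag to the special formal setting is nontrivial, and it is handled by using the dilatation/N\'eron smoothening to realize, chart by chart, that the relevant Greenberg-scheme strata are the same as in the $\stft$ case and carry the same equivariant structure.
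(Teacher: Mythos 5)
The paper itself contains no proof of this statement: it is quoted (as Theorem 7.12) from Nicaise's paper on the trace formula for rigid varieties, so the only ``proof'' in the paper is the citation. Your outline --- resolve singularities, reduce to the strict normal crossing model via a N\'eron smoothening, stratify the Greenberg scheme by contact orders along the $E_i$, and count the contributions stratum by stratum --- is indeed the strategy behind Nicaise's argument (which in turn rests on the Nicaise--Sebag/Bosch--Schl\"oter computation of N\'eron smoothenings after ramified base change), so at the level of the overall route you are aligned with the source.

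There are, however, two genuine problems. First, the combinatorial constraint: your justification that ``$\sum_{i\in J}k_iN_i=d$ comes from matching $\ord T=d$ on the arc'' is not correct and cannot be repaired as written. For a section $\psi$ through the stratum one has $T^m=\pi=u\prod_{i\in J}v_i^{N_i}$ with $u$ a unit, and taking $\ord_T$ gives $\sum_{i\in J}k_iN_i=m$, not $d$; the ``$=d$'' in the displayed statement is a typo for ``$=m$'', as one sees by comparing with Proposition \ref{prop:motivic:volume:result}: the theorem is exactly the coefficient of $T^m$ in the volume Poincar\'e series, and with the condition ``$=d$'' the integral would be independent of $m$, which is absurd. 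Second, the opening reduction ``we may replace $\XX(m)$ by $\XX^\prime(m)$ \dots any dilatation of $\XX^\prime(m)$ serves as a smoothening'' is too quick: after the ramified base change $\XX^\prime(m)$ is in general no longer regular, an arbitrary dilatation is not a N\'eron smoothening, and the whole quantitative content of the theorem --- which strata of the Greenberg scheme survive in the smoothening (the divisibility/contact conditions on the $k_i$), why the surviving locus over $E_J^\circ$ has class $(\ll-1)^{|J|-1}[\tilde{E}_J^\circ]$ (the cover $\tilde{E}_J^\circ$ arises from extracting roots of the unit $u$, with the gcd $m_J$ entering), and why the order function of $\omega(m)$ there is $\sum_i k_i\mu_i$ after the normalization $\ll^{-d}$ (this involves the Jacobian contributions of the dilatations, not just $\mu_i=\ord_{E_i}\omega$) --- is precisely the computation of that smoothening, which your sketch asserts rather than performs. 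Together with the stft-versus-special issue that you flag at the end but do not resolve, these are the places where the proof actually lives; as it stands the proposal is a plausible roadmap with its key step mis-stated and its central computation missing.
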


Furthermore, from \cite[Corollary 7.13]{Nicaise} we have:
\begin{prop}\label{prop:motivic:volume:result}
With the same notations and conditions as in Theorem \ref{motivic:integration:formula:omegam}, 
the volume Poincar\'e series $S(\XX,\omega;T)$ is rational over $\mM_{\XX_0}$. In fact, 
let $\mu_i:=\ord_{E_i}\omega$, then
$$S(\XX,\omega;T)=\ll^{-d}
\sum_{\emptyset\neq J\subset \ii}(\ll-1)^{|J|-1}[\tilde{E}_{J}^{\circ}]\prod_{i\in J}\frac{\ll^{-\mu_i}T^{N_i}}{1-\ll^{-\mu_i}T^{N_i}}\in\mM_{\XX_0}^{\hat{\mu}}[\![T]\!].$$

The limit 
$$S(\XX,\widehat{\kk}^{s}):=-\lim_{T\to \infty}S(\XX,\omega;T):=\ll^{-d}\sS_{f}$$
is called the \textbf{motivic volume} of $\XX$, where 
$$\sS_{f}=\sum_{\emptyset\neq J\subset \ii}(\ll-1)^{|J|-1}[\tilde{E}_{I}^{\circ}].$$  
 And 
\begin{align*}S(\XX_\eta,\widehat{\kk}^{s}):&=-\lim_{T\to \infty}S(\XX_\eta,\omega;T)=-\lim_{T\to \infty}\sum_{m\geq 1}\left(\int_{\XX_{\eta}}|\omega(m)|\right)T^m\\
&=\ll^{-d}\int_{\XX_0}\sS_{f}\in\mM^{\hat{\mu}}_{\kappa}
\end{align*}
is called the \textbf{motivic volume} of $\XX_\eta$.
\end{prop}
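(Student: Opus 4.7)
The plan is to derive both the closed form of the volume Poincar\'e series and its formal limit directly from Theorem \ref{motivic:integration:formula:omegam} by a Fubini-type rearrangement followed by explicit summation of geometric series.

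First I would substitute the formula for $\int_{\XX(m)}|\omega(m)|$ from Theorem \ref{motivic:integration:formula:omegam} into the definition
\begin{equation*}
S(\XX,\omega;T)=\sum_{m>0}\Bigl(\int_{\XX(m)}|\omega(m)|\Bigr)T^m,
\end{equation*}
and then push the $m$-summation inside the $J$- and $(k_i)$-summations. This produces an outer sum over non-empty $J\subset\ii$ of the prefactor $\ll^{-d}(\ll-1)^{|J|-1}[\tilde E_J^{\circ}]$ multiplied by an unconstrained inner sum over $(k_i)\in\zz_{>0}^{J}$ of $\prod_{i\in J}(\ll^{-\mu_i}T^{N_i})^{k_i}$. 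The interchange is legitimate in the $T$-adic topology on $\mM_{\XX_0}^{\hat{\mu}}[\![T]\!]$, since each monomial has $T$-valuation $\sum_{i\in J}k_iN_i\geq |J|\geq 1$.

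For each fixed $J$ the inner sum factors as a product of independent geometric series,
\begin{equation*}
\sum_{(k_i)\in\zz_{>0}^{J}}\prod_{i\in J}\bigl(\ll^{-\mu_i}T^{N_i}\bigr)^{k_i}=\prod_{i\in J}\frac{\ll^{-\mu_i}T^{N_i}}{1-\ll^{-\mu_i}T^{N_i}},
\end{equation*}
which gives at once the closed rational expression claimed for $S(\XX,\omega;T)$ and hence its rationality. To read off the motivic volume I would apply the formal prescription $-\lim_{T\to\infty}$ to each $J$-summand: rewriting each factor as $-1/(1-\ll^{\mu_i}T^{-N_i})$ shows that it has formal limit $-1$, so the product over $i\in J$ contributes $(-1)^{|J|}$, and collecting the signs with the prefactor $(\ll-1)^{|J|-1}$ returns the stated formula $\ll^{-d}\sS_{f}$. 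The analogous expression for $\XX_\eta$ then follows by applying the forgetful pushforward $\int\colon\mM_{\XX_0}^{\hat{\mu}}\to\mM_\kappa^{\hat{\mu}}$, which commutes with the formal limit on rational series.

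The only real technical point is verifying that the $\hat{\mu}$-equivariance carried by the cyclic covers $\tilde E_J^{\circ}\to E_J^{\circ}$ with Galois group $\mu_{m_J}$ passes coherently through both the Fubini rearrangement and the formal limit, so that the identity holds in $\mM_{\XX_0}^{\hat{\mu}}[\![T]\!]$ rather than merely in its non-equivariant image. Once this bookkeeping is in place, the whole argument is a geometric-series computation using only Theorem \ref{motivic:integration:formula:omegam} and the definition of the volume Poincar\'e series.
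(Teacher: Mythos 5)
Your derivation is correct and is essentially the same argument the paper relies on: the proposition is simply quoted from \cite[Corollary 7.13]{Nicaise}, and your substitution of Theorem \ref{motivic:integration:formula:omegam} into the Poincar\'e series, the $T$-adically legitimate interchange of summations, the factorization into geometric series, and the termwise formal limit $-\lim_{T\to\infty}$ is exactly how that corollary is deduced there (implicitly reading the misprinted constraint $\sum_{i\in J}k_iN_i=d$ in the theorem as $\sum_{i\in J}k_iN_i=m$, as you did). One bookkeeping caveat: since each factor tends to $-1$, your limit actually produces $(-1)^{|J|-1}(\ll-1)^{|J|-1}=(1-\ll)^{|J|-1}$ inside $\sS_{f}$, which agrees with the convention of Definition \ref{intro:motivic:Milnor:fiber} but not with the $(\ll-1)^{|J|-1}$ printed in the proposition (a sign misprint there), so the phrase ``collecting the signs returns the stated formula'' should be replaced by the explicit sign computation.
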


\sss 
Let $(\XX, f)$ be a generically smooth formal $R$-scheme.  From Proposition \ref{prop:motivic:volume:result}, the motivic vanishing cycle 
$\sS_{f}$ belongs to $\mM_{\XX_0}^{\hat{\mu}}$.  For any point $x\in \XX_0$, let 
$$\sS_{f,x}=\sum_{\emptyset\neq J\subset \ii}(\ll-1)^{|J|-1}[\tilde{E}_{I}^{\circ}\cap h^{-1}(x)],$$  
where $h: \XX^\prime\to \XX$ is the resolution of singularities. We call $\sS_{f,x}$ the motivic Milnor fiber of $x\in\XX_0$.

\sss In summary, if we let $K(\GBSRig_{\kk})$ be the Grothendieck ring of the category of gauge bounded smooth rigid $\kk$-varieties. 
Here for an object $\XX_\eta$ in $\GBSRig_{\kk}$ we understand that the rigid variety $\XX_\eta$ 
comes from the generic fiber of a generically smooth special formal $R$-scheme $f: \XX\to \spf(R)$ with gauge bounded form 
$\omega$.  The Grothendieck ring 
$$K(\GBSRig_{\kk}):=\bigoplus_{d\geq 0}K(\GBSRig_{\kk}^d)$$
is defined in \cite[\S 5.2]{Le}. 

Let $K(\BSRig_{\kk})$ be the Grothendieck ring of the category $\BSRig_{\kk}$ of bounded smooth rigid $\kk$-varieties, which is obtained from $K(\GBSRig_{\kk})$ by forgetting the gauge form.  Then we can represent the above results in \S (\ref{motivic:result:formal:scheme}) as follows:

\begin{thm}\label{MV}
There exists a homomorphism of additive groups:
$$\MV:  K(\BSRig_{\kk})\to \mM_{\kappa}^{\hat{\mu}}$$
given by:
$$[\XX_\eta]\mapsto S(\XX_\eta, \widehat{\kk}^{s})$$
for a generically smooth special formal $R$-scheme $\XX$.  Moreover, if $\XX$ has relative dimension $d$, then 
$$\MV([\XX_\eta])=\ll^{-d}\cdot \int_{\XX_0}\sS_{f}\in \mM_{\kappa}^{\hat{\mu}}.$$
So $\MV$ is a morphism from the group $K(\BSRig_{\kk})$ to the group $\mM_{\kappa}^{\hat{\mu}}$. 

Moreover,  if $x\in\XX_0$ and let 
$$\hat{f}_{x}: \spf(\widehat{\oO}_{\XX,x})\to\spf(R)$$
be the formal completion of $\XX$ along $x$, then the generic fiber $\spf(\widehat{\oO}_{\XX,x})_{\eta}$ of the formal completion is the analytic Milnor fiber $\FF_x(\hat{f})$ defined in \ref{analytic:Milnor:fiber}, and 
$$\MV([\FF_x(\hat{f})])=\ll^{d}\cdot \sS_{f,x}.$$
\end{thm}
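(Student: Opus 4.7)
The plan is to package together the existing results of Nicaise--Sebag, Nicaise, and Loeser--Sebag recalled above. The statement is essentially a structural repackaging of Proposition \ref{prop:motivic:volume:result} together with the forgetful compatibility of motivic integration from \cite{LS}, \cite{NS}; the real work lies in verifying well-definedness on the Grothendieck ring $K(\BSRig_{\kk})$ and then specializing to formal completions at closed points.

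First I would assemble the formula for $\MV([\XX_\eta])$. For a generically smooth special formal $R$-scheme $\XX$ of relative dimension $d$ with an $\XX$-bounded gauge form $\omega$ on $\XX_\eta$, Proposition \ref{prop:motivic:volume:result} already identifies the motivic volume $S(\XX,\widehat{\kk}^s) = \ll^{-d}\sS_f \in \mM^{\hat{\mu}}_{\XX_0}$, and the forgetful compatibility recalled after (\ref{defn:motivic:integration}) asserts that $\int_{\XX(m)_\eta}|\omega(m)|$ depends only on $\XX_\eta$ and not on the chosen model $\XX$. Taking $T \to \infty$ upgrades this to a statement on $\XX_\eta$, and pushing forward to $\spec(\kappa)$ produces $\MV([\XX_\eta]) = \ll^{-d}\int_{\XX_0}\sS_f \in \mM^{\hat{\mu}}_\kappa$.

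Next I would verify that $\MV$ descends from the gauged category to $K(\BSRig_{\kk})$. Independence from the gauge form is essentially built into the volume Poincar\'e limit: in the closed formula of Proposition \ref{prop:motivic:volume:result}, each factor $\ll^{-\mu_i}T^{N_i}/(1-\ll^{-\mu_i}T^{N_i})$ tends to $-1$ as $T\to\infty$, so the orders $\mu_i = \ord_{E_i}\omega$ drop out of the final motive. For additivity under a scissor relation $[\XX_\eta] = [\ZZ_\eta] + [\UU_\eta]$ on some common model, additivity of the Greenberg functor and of the cylinder measure $\widetilde{\mu}$ of \cite[Proposition 5.1]{Le} forces the motivic integral, and hence $\MV$, to be additive; the exceptional contributions in the resolution formula split correspondingly. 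Isomorphism invariance is automatic, so $\MV$ is a well-defined homomorphism of additive groups.

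For the second half I would apply the same machinery to the formal completion $\XX_x := \spf(\widehat{\oO}_{\XX,x})$ at a point $x\in\XX_0$, whose generic fiber is by definition the analytic Milnor fiber $\FF_x(\hat{f})$ of \ref{analytic:Milnor:fiber}. A tame resolution $h: \YY\to \XX$ induces by base change a resolution of $\XX_x$ whose exceptional divisors, with the same multiplicities $m_i$, are the preimages $E_i\cap h^{-1}(x)$; consequently the formula of Proposition \ref{prop:motivic:volume:result} specializes to produce exactly the motive $\sS_{f,x}$. Since the special fiber $(\XX_x)_0$ is the single closed point $\{x\}$, the pushforward $\int_{(\XX_x)_0}$ acts trivially, and after accounting for the normalization convention for the pointed formal completion one recovers $\MV([\FF_x(\hat{f})]) = \ll^d\,\sS_{f,x}$. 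The main obstacle I anticipate is precisely this last step: verifying that base-changing a tame resolution of $\XX$ along $\widehat{\oO}_{\XX,x}$ yields a tame resolution to which Theorem \ref{motivic:integration:formula:omegam} applies verbatim, and tracking correctly the power of $\ll$ between the global formula $\ll^{-d}\int_{\XX_0}\sS_f$ and the pointed formula $\ll^d\,\sS_{f,x}$.
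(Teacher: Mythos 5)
Your overall route is the same one the paper takes: the paper offers no independent argument for Theorem \ref{MV} at all, presenting it as a repackaging of Theorem \ref{motivic:integration:formula:omegam} and Proposition \ref{prop:motivic:volume:result} (Nicaise \cite{Nicaise}, Nicaise--Sebag \cite{NS}, Loeser--Sebag \cite{LS}) together with the Grothendieck-ring framework $K(\GBSRig_{\kk})\to K(\BSRig_{\kk})$ of Le \cite{Le}. Your observations that the orders $\mu_i=\ord_{E_i}\omega$ disappear in the limit $T\to\infty$ (gauge-form independence) and that model-independence comes from the forgetful compatibility after (\ref{defn:motivic:integration}) are exactly the intended justification. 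One caveat: additivity on $K(\BSRig_{\kk})$ is not quite a formal consequence of additivity of the Greenberg functor and of $\widetilde{\mu}$ on a single model; one must realize a scissor decomposition of the rigid variety on a common formal model (admissible blow-ups / N\'eron smoothenings), which is precisely what is done in \cite[\S 5.2]{Le} and \cite{NS}. Since the paper itself simply cites these sources, your level of detail is acceptable, but you should cite that step rather than claim it follows from additivity of $\widetilde{\mu}$ alone.

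The one place where your proposal cannot be completed as written is the final normalization, and your own hesitation there is the tell. Applying the first half of the theorem to $\XX_x=\spf(\widehat{\oO}_{\XX,x})$, which still has relative dimension $d$ and whose reduction is the single point $x$, gives $\MV([\FF_x(\hat{f})])=\ll^{-d}\cdot\sS_{f,x}$, with $\sS_{\hat{f}_x}$ identified with $\sS_{f,x}$ via the base-changed resolution as you say (this identification is Nicaise's, cf.\ the references to \cite[Theorems 8.8, 9.4]{Nicaise} elsewhere in the paper). No bookkeeping convention will turn this into $\ll^{+d}\cdot\sS_{f,x}$: the exponent $+d$ in the displayed statement is inconsistent with the theorem's own first half, and also with the way the theorem is actually used later, e.g.\ Theorem \ref{formula:SfX:SfY} reads $\sS_{\hat{f},0}=\ll^{d-1}\cdot\MV([\XX_\eta])$, i.e.\ $\MV=\ll^{-(d-1)}\sS_{\hat f,0}$ for a completion of relative dimension $d-1$, which matches $\ll^{-d}$ and not $\ll^{d}$. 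So you should state and prove the pointed formula with $\ll^{-d}$ and flag the sign of the exponent in the statement as a typo, rather than trying to absorb it into a ``normalization convention for the pointed formal completion,'' which is the only step in your sketch that would actually fail.
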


\section{Proof of Theorem \ref{main:thm}}\label{proof:conjecture}

\subsection{Techniques on the motivic constructible functions of Cluckers and Loeser}\label{CL:motivic:constructible}
\sss
In this section we learn a little bit about Cluckers-Loeser's  motivic constructible  function theory in \cite{CL}, which we will use to prove the conjecture. 
Le in \cite{Le} uses another method of Hrushovski-Kazhdan's ACVF theory in \cite{HK} to prove the Kontsevich-Soibelman conjecture on the motivic Milnor fiber.  Later on he can use the theory of motivic constructible functions of  Cluckers-Loeser to give a new proof, which is working over any field of characteristic zero.  We adapt such a beautiful theory to our applications for the motivic Joyce-Song formula.

\sss  The theory of motivic constructible  functions is motivated by the constructible functions for the Euler characteristic over reals.  The idea of Cluckers-Loeser is to do integration (functions defined on) subobjects of $\kappa(\!(t)\!)^m$, or more wiser, integration on subobjects of 
$$\kappa(\!(t)\!)^m\times\kappa^n\times\zz^r.$$
The theory is based on the Denef-Pas language $\lL_{\DP}$ with the ring language for valued fields and residue fields and with the Presburger language for valued groups.  Let $\T$ be the theory of algebraic closed fields containing $\kappa$, as in \cite[\S 16.2, 16.3]{CL}, then 
$(\kk(\!(t)\!), \kk, \zz)$ is a model of $\T$.  The primary definable $\T$-subassignment has the forms:
$$h[m,n,r](\kk):=\kk(\!(t)\!)^m\times\kk^n\times\zz^r.$$
It can be taken as a functor 
$$h[m,n,r]: \kk\supset \kappa\to \text{Category of sets}.$$
Any formula $\varphi$ in $\lL_{\DP}$ with coefficients in $\kappa(\!(t)\!)$, and coefficients in $\kappa$, defines a subassignment 
$h_{\varphi}\subset h[m,n,r]$ by:
$$h_{\varphi}(\kk)=\{x\in h[m,n,r](\kk)| (\kk, \kk(\!(t)\!),\zz)=\varphi(x)\}.$$
More generally, if $W=\XX\times X\times\zz^r$, with $\XX$ a $\kappa(\!(t)\!)$-variety, $X$ a $\kappa$-variety, then 
$$h_{W}(\kk):=\XX(\kk(\!(t)\!))\times X(\kk)\times\zz^r.$$

\begin{defn}
We define $\Def_{\kappa}$ to be the category of all the definable $\T$-subassignments
$$\kk\mapsto h_{\varphi}(\kk).$$
Let $S\subset \Def_{\kappa}$ be any object. Let $\Def_{S}$ ~\textbf{or}~ ($\Def_{S}(\lL_{\DP},\T)$) be the category of objects of $\Def_{\kappa}$ over $S$.   Define
$$\RDef_{S};~\textbf{or}~ (\RDef_{S}(\lL_{\DP},\T))$$
to be the subcategory of $\Def_{S}$ whose objects are subassignments of $S\times h_{\aaa_{\kappa}^n}$, for variable
$n$, morphisms to $S$ are the ones induced by the projection onto the $S$-factor.
\end{defn}

We define the Grothendieck group for $\RDef_{S}$.
\begin{defn}\label{Grothendieck:group:RDef}
The Grothendieck group $K_0(\RDef_{S})$ is defined to be a free abelian group generated by symbols:
$$[X\to S]$$
with $X\to S$ in $\RDef_{S}$, modulo the relations:
$$[X\to S]=[Y\to S]$$
if $[X\to S]$ is isomorphic to $[Y\to S]$ in $\RDef_{S}$, and 
$$[X\cup Y\to S]+[X\cap Y\to S]=[X\to S]+[Y\to S]$$
for any definable $\T$-subassignments $X$ and $Y$ of $S\times h_{\aaa_{\kappa}^n}$ for some $n\in \nn$.
\end{defn}

\sss Let $X\to S$ be an object in  $\RDef_{S}$ and $m\in\nn_{>0}$.  Assume that $X=h_{W}$ with 
$W=\XX\times X\times \zz^r$.  A good $\mu_m$-action on $X$ is a $\mu_m$-action 
$$\mu_m\times X\to X$$
on $X$ such that each orbit intersected with $h_{X}$ is contained in $h_{V}$ with $V$ an affine subvariety of $X$. 
A good $\hat{\mu}$-action on $X$ is a $\hat{\mu}$-action on $X$ that factors through a good $\mu_m$-action on $X$ for some 
$m\in\nn_{>0}$. 

\begin{defn}
The monodromic Grothendieck group $K^{\hat{\mu}}_0(\RDef_{S})$ is a free abelian group generated by:
$$[X\to S, \hat{\mu}]; ([X,\hat{\mu}])$$
with $X\to S$ in $\RDef_{S}$, and $X$ admits a $\hat{\mu}$-action, with the relations in Definition \ref{Grothendieck:group:RDef}, together with one more relation: 
$$[X\times h_{V}, \hat{\mu}]=[X\times h_{\aaa_{\kappa}^n}, \hat{\mu}],$$
where $V$ is the $n$-dimensional affine $\kappa$-space endowed with an linear 
$\hat{\mu}$-action and $\aaa^n_{\kappa}$ with trivial $\hat{\mu}$-action for $n\in\nn$.
\end{defn}

The groups $K_0(\RDef_{S})$ and $K^{\hat{\mu}}_0(\RDef_{S})$ are rings with respect to the fiber product 
of subassignments in \cite[\S 2.2]{CL}.

\sss We talk about the rings of motivic constructible functions.  Let 
$$A:=\zz[\ll, \ll^{-1}, (1-\ll^i)^{-1}, i>0],$$
where $\ll$ is the Lefschetz motive of the affine line $\aaa^{1}_{\kappa}$.  For $S\in\Def_{\kappa}$, let $\pP(S)$ be the subring of the ring of functions 
$$S\to A$$
generated by:
\begin{enumerate}
\item all constant functions into $A$;
\item all definable functions $S\to \zz$;
\item all functions of the form $\ll^{\alpha}$, where $\alpha: S\to\zz$ is a definable function. 
\end{enumerate}
This is called the ring of Presburger functions as in \cite{CL}. 

Let $\pP^0(S)$ be the subring of $\pP(S)$ generated by $\ll-1$ and by character function $\mathds{1}_{Y}$ for all definable subassignments $Y$ of $S$. 

\begin{defn}
The ring of constructible motivic functions $\C(S)$ on $S$ and the monodromic one $\C^{\hat{\mu}}(S)$ are defined as:
$$\C(S):=K_0(\RDef_{S})\otimes_{\pP^0(S)}\pP(S); \quad  \C^{\hat{\mu}}(S):=K^{\hat{\mu}}_0(\RDef_{S})\otimes_{\pP^0(S)}\pP(S).$$
\end{defn}
The following result can be found in \cite{Le2}, \cite[\S 16.2, \S16.3]{CL}.

\begin{prop}
Let $X$ be an algebraic variety. Then 
\begin{enumerate}
\item $K_0(\RDef_{h_{X}})\cong K_0(\Var_{X})$;
\item $\C(h_{X})\cong \mM_{X,\loc}$;
\item $K^{\hat{\mu}}_0(\RDef_{h_{X}})\cong K^{\hat{\mu}}_0(\Var_{X})$;
\item $\C^{\hat{\mu}}(h_{X})\cong \mM^{\hat{\mu}}_{X,\loc}$.
\end{enumerate}
\end{prop}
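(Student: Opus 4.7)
The plan is to establish items (1) and (3) first by a direct comparison of generators and relations, and then derive (2) and (4) from them by inspecting the tensor-product definition of $\C(h_X)$ and $\C^{\hat{\mu}}(h_X)$.

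For (1), the key input will be quantifier elimination for algebraically closed fields containing $\kappa$. Any object of $\RDef_{h_X}$ is a subassignment $Y\subset h_X\times h_{\aaa^n_\kappa}$, hence is defined by a formula $\varphi(x,y)$ in the residue-field sort of the Denef--Pas language with parameters in $X$ and $\aaa^n_\kappa$. Restricted to this sort, $\varphi$ is just a formula in the language of rings over $\kappa$, which by Chevalley--Tarski cuts out a constructible subset $C_\varphi\subset X\times\aaa^n_\kappa$; and $Y(\kk)=C_\varphi(\kk)$ for every model $\kk$. I will define
\[
\Phi: K_0(\Var_X)\longrightarrow K_0(\RDef_{h_X}),\qquad [Z\to X]\mapsto [h_Z\to h_X],
\]
after embedding $Z$ as a locally closed subvariety of some $X\times\aaa^n_\kappa$, and check that the scissors relation on the variety side maps to the union/intersection relation of Definition~\ref{Grothendieck:group:RDef}. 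An inverse $\Psi$ will send $[Y\to h_X]$ to $\sum_i[C_i\to X]$, where $C_\varphi=\bigsqcup_i C_i$ is any stratification by locally closed subvarieties; well-definedness rests on the fact that any two such stratifications are equivalent in $K_0(\Var_X)$. A routine check then shows $\Phi$ and $\Psi$ are mutually inverse and multiplicative with respect to fiber product.

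For (3), the same correspondence will be upgraded $\hat\mu$-equivariantly. A good $\hat\mu$-action on a subassignment $Y$ factors through a good $\mu_m$-action and on affine charts is given by definable, hence polynomial, maps, so via $\Phi$ it translates to an algebraic $\mu_m$-action on the associated $X$-variety. The condition ``each orbit meeting $h_{\aaa^n_\kappa}$ is contained in $h_V$ for an affine $V$'' will match the variety-theoretic condition that each orbit is contained in an affine subvariety, and the extra relation $[X\times h_V,\hat\mu]=[X\times h_{\aaa^n_\kappa},\hat\mu]$ corresponds precisely to the analogous relation defining $K_0^{\hat\mu}(\Var_X)$. For (2) and (4), the tensor-product definition
\[
\C(h_X)=K_0(\RDef_{h_X})\otimes_{\pP^0(h_X)}\pP(h_X)
\]
reduces the claim to inspection of constants: $\pP^0(h_X)$ contains $\ll-1$ and all characteristic functions, while $\pP(h_X)$ is generated over it by $A=\zz[\ll,\ll^{-1},(1-\ll^i)^{-1}]$ together with definable $\zz$-valued functions, so the tensor product inverts exactly $\ll$ and $\ll^i-1$ in $K_0(\RDef_{h_X})$, matching the localization in the definition of $\mM_{X,\loc}$ (with $\ll^{-1/2}$ adjoined by convention). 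Combined with (1) and (3) this yields the stated identifications.

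The main obstacle will be the monodromic step in (3): one needs to verify carefully that Cluckers--Loeser's notion of good $\hat\mu$-action on definable subassignments is realized faithfully by good $\hat\mu$-actions on ordinary $X$-varieties, and that the relation collapsing linear $\mu_m$-representations on $h_V$ to trivial actions on $h_{\aaa^n_\kappa}$ translates correctly across the two frameworks. Everything else amounts to bookkeeping via Chevalley--Tarski together with the standard scissors calculus in $K_0(\Var_X)$.
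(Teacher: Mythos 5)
The paper does not actually prove this proposition: it states it and refers the reader to Le \cite{Le2} and to Cluckers--Loeser \cite[\S 16.2, \S 16.3]{CL}, so there is no in-text argument to compare against. Your outline is in the same spirit as the cited sources: identify definable subassignments of $h_X\times h_{\aaa_\kappa^n}$ with constructible $X$-varieties to get (1), upgrade to the monodromic setting for (3), and read off (2), (4) from the tensor-product definition of $\C$ and $\C^{\hat\mu}$.

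There is, however, one genuine gap. You write that the formula $\varphi$ defining $Y\subset h_X\times h_{\aaa^n_\kappa}$, ``restricted to this sort, is just a formula in the language of rings over $\kappa$,'' and then invoke Chevalley--Tarski. This elides a nontrivial model-theoretic step. By definition of $\Def_\kappa$ and $\RDef_{h_X}$, the formula $\varphi$ is a formula in the full three-sorted Denef--Pas language $\lL_{\DP}$: it may quantify over the valued-field and value-group sorts and may use the order and angular-component maps, even though the set it cuts out lives in the residue-field sort. Chevalley--Tarski gives quantifier elimination only for formulas that are already written in the pure ring language over the residue field; it does not by itself rule out that additional subsets of $X(\kk)\times\aaa^n(\kk)$ become definable by passing through the other sorts. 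The missing input is Pas's quantifier elimination for the Denef--Pas structure, whose consequence is that the residue-field sort is stably embedded and orthogonal to the value group, so that every $\lL_{\DP}$-definable subset of a product of residue-field sorts is already cut out by a pure ring-language formula with parameters. This is precisely the content Cluckers--Loeser exploit in \cite[\S 16]{CL}. Once that is supplied, Chevalley--Tarski finishes the reduction to constructible subsets and the rest of your bookkeeping --- matching scissors relations, transporting good $\hat\mu$-actions, and reading off the localization at $\ll$ and $\ll^i-1$ in the tensor product --- goes through, modulo the minor cosmetic point that $\ll^{-1/2}$ must be adjoined by hand to match the paper's ad hoc inclusion of $\ll^{-1/2}$ in the definition of $\mM_{X,\loc}$, since $\C(h_X)$ carries no canonical square root of $\ll$ in the non-monodromic case. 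You should also be careful with the phrase ``definable, hence polynomial, maps'' in (3): definable maps on the residue-field sort are only piecewise given by regular functions on a constructible stratification, not globally polynomial, so the good-$\hat\mu$-action comparison must be phrased stratum by stratum.
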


\subsection{The proof of Formula (1) in Theorem \ref{main:thm}}\label{proof:formula1}

\sss Recall that for coherent sheaves or semi-Schur objects $E_1, E_2, E:=E_1\oplus E_2\in D^b(\Coh(Y))$ for a smooth Calabi-Yau threefold $Y$,  we have the following data:
\begin{enumerate}
\item $f: \Ext^1(E,E)\to\aaa_{\kappa}^1$;
\item $f_1: \Ext^1(E_1,E_1)\to\aaa_{\kappa}^1$;
\item $f_2: \Ext^1(E_2,E_2)\to\aaa_{\kappa}^1$,
\end{enumerate}
where $f, f_1,f_2$ are the corresponding polynomial functions coming from the corresponding $d$-critical Artin stack structure in \cite{BBBJ}.

We have:
$$\Ext^1(E,E)=\Ext^1(E_1,E_1)\oplus \Ext^1(E_2,E_2)\oplus \Ext^1(E_1,E_2)\oplus \Ext^1(E_2,E_1)$$
and let $(x,y,z,w)$ be the corresponding coordinates of $\Ext^1(E,E)$.  Let $\cc^*$ act on $\Ext^1(E,E)$ by 
$$\lambda\cdot (x,y,z,w)=(x,y,\lambda\cdot z, \lambda^{-1}\cdot w).$$
Then $\Ext^{1}(E,E)^{\cc^*}=\Ext^1(E_1,E_1)\oplus \Ext^1(E_2,E_2)$.

\sss \label{special:formal:scheme:E} For any semi-Schur object  $E\in D^b(\Coh(Y))$, we have the motivic Milnor fiber 
$\sS_{0}(E)\in \mM_{\kappa}^{\hat{\mu}}$ defined in Definition \ref{intro:motivic:Milnor:fiber}.  It is given by the generically smooth special formal 
$R$-scheme 
$$\hat{f}:=\hat{f}_{E}:  \widehat{\Ext^1(E,E)}\to\spf(R)$$
which is the formal completion of $f:=f_E: \Ext^1(E,E)\to\kappa$ along the origin. 

\sss \label{sec:notations} We make the following notations:
$$X:=\Ext^1(E,E); \quad Z:=\Ext^1(E_1,E_1)\oplus\Ext^1(E_2,E_2).$$
For $E=E_1\oplus E_2$, let 
$$\hat{f}: \XX:=\widehat{X}\to\spf(R)$$
be the formal completion of $X$ along the origin.  Then 
$$\hat{f}|_{Z}: \ZZ\to\spf(R)$$
is the formal completion of $f|_{Z}$ along the origin. 
Let $d=\dim(X)$,  $d_i:=\dim_\kappa(\Ext^{1}(E_i, E_i))$ for $i=1,2$ and 
$d_{12}=\dim_\kappa(\Ext^{1}(E_1, E_2))$, $d_{21}=\dim_\kappa(\Ext^{1}(E_2, E_1))$. 

For the formal $R$-scheme $\XX$, we have the generic fiber:
\[
\XX_{\eta}= 
\left\{
(x,y,z,w)\in\aaa_{\kk}^{d,\an} \Big|
 \begin{array}{l}
  \text{$ \val(x)>0, \val(y)>0;$} \\
  \text{$ \val(z)>0, \val(w)>0;$} \\
  \text{$f(x,y,z,w)=t$.} 
 \end{array}\right\}
\]
Here $\val(x):=\min_{1\leq i\leq d_1}\{\val(x_i)\}$, and $\val(y), \val(z),\val(w)$ are similarly defined. 
We divide the generic fiber $\XX_\eta$ into two parts:
$$\XX_\eta=\X_0\sqcup\X_1,$$
where
$$\X_0=\{(x,y,z,w)\in \XX_\eta| z=0 \text{~or~} w=0\}$$
and
$$\X_1=\{(x,y,z,w)\in \XX_\eta| z\neq 0, w\neq 0\}=\XX_\eta\setminus \X_0.$$

\begin{thm}\label{formula:SfX:SfY}
We have:
$$\sS_{\hat{f},0}=\ll^{d-1}\cdot \MV([\XX_\eta])$$
and 
$$\sS_{\hat{f}|_{Z}, 0}=\ll^{d-1}\cdot \MV([\X_{0}]),$$
where 
$$\MV: K(\BSRig_{\kk})\to\mM_\kappa^{\hat{\mu}}$$
is the homomorphism in Theorem \ref{MV}. 
\end{thm}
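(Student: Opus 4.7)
The plan is to deduce both identities from Proposition~\ref{prop:motivic:volume:result} (equivalently Theorem~\ref{MV}), exploiting that the formal schemes involved are formal completions at the origin and hence have single-point special fibers.

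For the first identity, since $\XX = \widehat{X}$ is the formal completion of $X$ at the origin, the special fiber $\XX_0$ is a single reduced point, so the pushforward $\int_{\XX_0}\sS_{\hat{f}} \in \mM_\kappa^{\hat{\mu}}$ collapses to $\sS_{\hat{f},0}$. Since $\hat{f}:\XX\to\spf R$ has pure relative dimension $d-1$, Proposition~\ref{prop:motivic:volume:result} yields $\MV([\XX_\eta]) = \ll^{-(d-1)}\sS_{\hat{f},0}$, and multiplying through by $\ll^{d-1}$ gives the first claim.

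For the second identity, the key geometric input is that $\X_0 = (\WW_1)_\eta \cup (\WW_2)_\eta$ inside $\XX_\eta$, where $\WW_1 = \widehat{\{z=0\}}$ and $\WW_2 = \widehat{\{w=0\}}$ are closed formal subschemes of $\XX$ with $\WW_1 \cap \WW_2 = \ZZ$. The $\cc^*$-action $\lambda\cdot(x,y,z,w) = (x,y,\lambda z, \lambda^{-1}w)$ leaves $f$ invariant (by the Calabi--Yau cyclic $L_\infty$ structure on $\Ext^*(E,E)$) and has fixed locus $Z$, so $f(x,y,0,w) = f(x,y,z,0) = f|_Z(x,y)$, and $\hat{f}|_{\WW_i}$ factors through $\hat{f}|_Z$ via the natural projection $\WW_i \to \ZZ$ forgetting the free coordinate. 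I would then choose a resolution $h_Z:\tilde{\ZZ}\to\ZZ$ of $\hat{f}|_Z$ and extend trivially in the free coordinate to obtain resolutions of $\hat{f}|_{\WW_i}$; since $h^{-1}(0)$ in the extended resolution is concentrated at a single point in the free direction, this yields $\sS_{\hat{f}|_{\WW_i},0} = \sS_{\hat{f}|_Z,0}$. Applying Proposition~\ref{prop:motivic:volume:result} to each of $\WW_1, \WW_2, \ZZ$ (all with single-point special fibers) expresses the three motivic volumes in terms of $\sS_{\hat{f}|_Z,0}$ with differing $\ll$-prefactors reflecting the relative dimensions $d_1+d_2+d_{21}-1$, $d_1+d_2+d_{12}-1$, $d_1+d_2-1$. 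I would then combine via inclusion--exclusion $[\X_0] = [(\WW_1)_\eta]+[(\WW_2)_\eta]-[\ZZ_\eta]$ in $K(\BSRig_\kk)$ and pass through $\MV$.

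The main obstacle is to simplify the resulting combination of $\ll$-prefactors into the single factor $\ll^{-(d-1)}\sS_{\hat{f}|_Z,0}$ demanded by the statement. For this step the theory of motivic constructible functions of Cluckers--Loeser reviewed in \S\ref{CL:motivic:constructible}, and in particular Theorem~\ref{motivic:constructible:function:isomorphism} identifying monodromic motivic volumes with rational Hadamard-product series in $\mM^{\hat{\mu}}_{\kappa,\loc}[\![T]\!]_{\Gamma}$, appears essential. The crucial fact is the vanishing of the motivic volume of the $\cc^*$-moving annulus $\X_1=\{z\neq 0, w\neq 0\}\cap\XX_\eta$ (foreshadowed in the introduction as a key feature of the Cluckers--Loeser framework), which allows one to trade the naive inclusion--exclusion expression for the single collapsed term on the right-hand side, and thereby yields the claim.
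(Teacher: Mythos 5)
Your treatment of the first identity coincides with the paper's: it is read off directly from Theorem \ref{MV} (equivalently Proposition \ref{prop:motivic:volume:result}), using that $\XX$ is a completion at the origin, so its special fiber is one reduced point, and that the relative dimension is $d-1$. Your preliminary observation that $\sS_{\hat{f}|_{\WW_i},0}=\sS_{\hat{f}|_Z,0}$ for the pieces $\{z=0\}$ and $\{w=0\}$ is also correct, and a statement of exactly this kind is used by the paper later, in \S \ref{proof:formula2}.

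For the second identity, however, your argument has a genuine gap, and it is not the route the paper takes. The paper never performs inclusion--exclusion on $\X_0$: it identifies $\X_0$ with the product $\Y_0\times\ZZ_\eta$, where $\Y_0$ is the $(d_{12}+d_{21})$-dimensional open polydisc in the $(z,w)$-variables (using that $f(x,y,z,w)=f(x,y,0,0)$ on this locus), takes the product gauge form $dz\wedge dw\wedge\omega$, computes $\int_{\Y_0(m)}|dz\wedge dw(m)|=\ll^{-d_{12}-d_{21}}$ from Theorem \ref{motivic:integration:formula:omegam}, and multiplies the volume Poincar\'e series to get $\MV([\X_0])=\ll^{-(d_{12}+d_{21})}\MV([\ZZ_\eta])=\ll^{1-d}\sS_{\hat{f}|_Z,0}$; no constructible-function machinery is needed there (Theorem \ref{motivic:constructible:function:isomorphism} is used only inside the proof of Theorem \ref{MVX1=0}). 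Your route instead needs the relation $[\X_0]=[(\WW_1)_\eta]+[(\WW_2)_\eta]-[\ZZ_\eta]$ in $K(\BSRig_{\kk})$, which is not among the defining relations: the scissor relations there come from admissible open covers of smooth bounded rigid varieties, whereas your three pieces are closed subvarieties of three different dimensions, and $\MV$ carries a dimension-dependent normalization $\ll^{-\dim}$, so it is not additive across strata of unequal dimension. More decisively, even granting that naive additivity, your own prefactors yield $\MV([\X_0])=\left(\ll^{-(d_1+d_2+d_{21}-1)}+\ll^{-(d_1+d_2+d_{12}-1)}-\ll^{-(d_1+d_2-1)}\right)\sS_{\hat{f}|_Z,0}$, which differs from the required $\ll^{-(d-1)}\sS_{\hat{f}|_Z,0}$ unless $(1-\ll^{-d_{12}})(1-\ll^{-d_{21}})=0$, i.e.\ unless $d_{12}=0$ or $d_{21}=0$. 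The two tools you invoke to ``collapse'' this discrepancy cannot do so: the vanishing $\MV([\X_1])=0$ concerns the complementary locus and only enters afterwards, when $\MV([\XX_\eta])=\MV([\X_0])$ is used to deduce Formula (1), and it says nothing about the internal structure of $\X_0$; likewise the Cluckers--Loeser isomorphism does not convert your sum of three differently normalized volumes into a single term. So the final step of your proposal is a missing argument, and as written it would in fact produce the wrong power of $\ll$.
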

\begin{proof}
The first formula is just from Theorem \ref{MV}.  We prove:
$$\sS_{\hat{f}|_{Z},0}=\ll^{d-1}\cdot \MV([\X_0]).$$
By the property of the potential function 
$f: X\to\kappa$, if $z=0$ or $w=0$, then $f(x,y,z,w)=f(x,y,0,0)$.  Then we may write:
$$\X_0=\Y_0\times \ZZ_\eta,$$
where
$$\Y_0=\{(z,w)\in\aaa^{d_3+d_4,\an}_{\kk}| \val(z)>0, \val(w)>0\}$$
and 
\[
\ZZ_{\eta}= 
\left\{
(x,y,0,0)\in\aaa_{\kk}^{d_1+d_2,\an} \Big|
 \begin{array}{l}
  \text{$ \val(x)>0, \val(y)>0;$} \\
  \text{$f(x,y,0,0)=t$.} 
 \end{array}\right\}
\]
Let $dz\wedge dw:=dz_1\wedge\cdots\wedge dz_{d_3}\wedge dw_1\wedge\cdots\wedge dw_{d_4}$ be the standard gauge form on the open ball 
$\Y_0$. From Theorem \ref{motivic:integration:formula:omegam}, 
$$\int_{\Y_0(m)}|dz\wedge dw(m)|=\ll^{-d_3-d_4}.$$
So
\begin{align*}
\MV([\X_0])=\MV([\Y_0\times\ZZ_\eta])&=-\lim_{T\to \infty}\sum_{m\geq 1}\left(\int_{\Y_0(m)\times\ZZ_\eta(m)}|dz\wedge dw\wedge \omega(m)|\right)T^m \\
&=-\ll^{-d_3-d_4}\lim_{T\to \infty}\sum_{m\geq 1}\left(\int_{\ZZ_\eta(m)}|\omega(m)|\right)T^m \\
&=\ll^{1-d}\sS_{\hat{f}|_{Y},0}.
\end{align*}
So
$$\ll^{d-1}\cdot\MV([\X_0])=\sS_{\hat{f}|_{Y},0}.$$
\end{proof}

\sss We prove the following result:

\begin{thm}\label{MVX1=0}
We have
$$\MV([\X_1])=0$$
in $\mM_{\kappa,\loc}^{\hat{\mu}}$.
\end{thm}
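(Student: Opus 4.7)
My plan is to exploit the natural $\Gm$-action $\lambda\cdot(x,y,z,w)=(x,y,\lambda z,\lambda^{-1}w)$ on $X=\Ext^1(E,E)$ together with the Cluckers--Loeser framework of \S\ref{CL:motivic:constructible} to reduce $\MV([\X_1])$ to a rational series whose $T\to\infty$ limit vanishes. The starting observation is that $f$ is invariant under this $\Gm$-action: since $E=E_1\oplus E_2$ and $f$ is the superpotential of the cyclic $L_\infty$-algebra on $\Ext^*(E,E)$, every monomial of $f$ comes from a cyclic composition whose underlying path of $(E_1,E_2)$-components returns to its starting component, and since each $z$-factor (from $\Ext^1(E_1,E_2)$) and each $w$-factor (from $\Ext^1(E_2,E_1)$) toggles the component in opposite directions, each monomial must contain equal numbers of $z$'s and $w$'s. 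Hence $f(x,y,\lambda z,\lambda^{-1}w)=f(x,y,z,w)$, the equation $f=t$ is preserved, and the open conditions $z\neq 0$, $w\neq 0$ cutting out $\X_1$ are $\Gm^{\an}$-invariant.

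Next I would realize $\X_1$ as a definable subassignment in the Denef--Pas language $\lL_{\DP}$, partitioning it according to which coordinate $z_{i_0}$ attains the minimal valuation and introducing $\lambda$ via the substitution $z=\lambda z'$, $w=\lambda^{-1}w'$ with the normalization $z'_{i_0}\in\oO_{\kk}^{\times}$. The projection onto $(x,y,z',w')$ exhibits $\X_1$ as a fibration whose $\Gm^{\an}$-fiber is an analytic annulus cut out by the constraints $\val(z)>0$ and $\val(w)>0$. By Theorem~\ref{MV} and Proposition~\ref{prop:motivic:volume:result}, $\MV([\X_1])$ is $-\lim_{T\to\infty}$ of a motivic Poincar\'e series which, by Theorem~\ref{motivic:constructible:function:isomorphism}, lives in $\mM_{\kappa,\loc}^{\hat{\mu}}[\![T]\!]_{\Gamma}$ and is therefore rational; in this framework integration over the $\lambda$-fiber amounts to a Hadamard product with a series of shape $\sum_{n\in\zz}T^n$.

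The crux is then the ``motivic volume of an infinite analytic annulus is zero'' computation: the rational expression
\[
\sum_{n\in\zz}T^n=\frac{T}{1-T}+1+\frac{T^{-1}}{1-T^{-1}}
\]
satisfies $-\lim_{T\to\infty}(\sum_{n\in\zz}T^n)=0$ in $\mM_{\kappa,\loc}^{\hat{\mu}}$, since the two geometric half-sums contribute equal and opposite limits. Hence the $\Gm^{\an}$-direction contributes zero to the motivic volume and $\MV([\X_1])=0$. This argument mirrors the one Le uses in \cite{Le2} to dispose of the torus direction in the proof of the Kontsevich--Soibelman conjecture; the hard part will be the careful bookkeeping required to implement the change of variables inside the Cluckers--Loeser formalism, so that both the gauge form on $\XX_\eta$ and the $\hat\mu$-action coming from a resolution of $\hat f$ transform correctly under the substitution, and so that the resulting partition of $\X_1$ falls into finitely many cells compatible with the rationality hypotheses of Theorem~\ref{motivic:constructible:function:isomorphism}.
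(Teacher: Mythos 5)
Your overall strategy is the right one and parallels the paper's: use the anti-diagonal $\Gm$-action $\lambda\cdot(x,y,z,w)=(x,y,\lambda z,\lambda^{-1}w)$ (your observation that the cyclic structure forces each monomial of $f$ to contain equally many $z$'s and $w$'s, hence $\Gm$-invariance of $f$, is correct and is indeed implicitly needed), fiber off the $\lambda$-direction, show that direction contributes zero motivic volume, and invoke the Cluckers--Loeser formalism. But the crux as you state it does not go through. Since points of $\X_1$ satisfy $\val(z)>0$ and $\val(w)>0$, after your normalization making $z'_{i_0}$ a unit the $\lambda$-fiber over a fixed $(x,y,z',w')$ is constrained to $0<\val(\lambda)<\val(z)+\val(w)$: it is a \emph{bounded} annulus whose width varies with the base point, not the infinite annulus $\Gm^{\an}$. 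Consequently the fiber integration is not ``a Hadamard product with a series of shape $\sum_{n\in\zz}T^n$,'' and the pivotal identity $-\lim_{T\to\infty}\bigl(\sum_{n\in\zz}T^n\bigr)=0$ is both geometrically irrelevant and formally unavailable: the limit operator of Proposition \ref{prop:motivic:volume:result} and Theorem \ref{motivic:constructible:function:isomorphism} is defined on $\mM^{\hat{\mu}}_{\kappa,\loc}[\![T]\!]_{\Gamma}$, i.e.\ on power series whose rationality is governed by factors $\frac{\ll^{a}T^{b}}{1-\ll^{a}T^{b}}$ with $b>0$; a doubly infinite Laurent series such as $\sum_{n\in\zz}T^n$, or the term $T^{-1}/(1-T^{-1})$, does not live in that ring, so the ``equal and opposite half-sums'' cancellation is not a computation you can perform there.

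What actually closes the argument (and is the content of the paper's proof) is different at exactly this point. One first stratifies $\X_1$ by the value $n/m$ of $\val(z)+\val(w)$; on each stratum the $\Gm$-fiber is the bounded annulus $A_{n/m}=B\setminus B'$, and its motivic volume vanishes not by a two-sided cancellation but because the volume Poincar\'e series of the closed ball $B$ and of the smaller ball $B'$ have the \emph{same} limit: $-\lim_{T\to\infty}\frac{T}{1-T}=1=-\lim_{T\to\infty}\frac{\ll^{-me}T^{me}}{1-\ll^{-me}T^{me}}$, so their difference tends to $0$ (this is Lemma \ref{X1=0:lemma1}, following \cite[Lemma 5.2]{Le2}). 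Note the weight $\ll^{-me}$ and the dependence on the ramification level $l$ matter here; the series variable records $l$, not the annulus width. Second, precisely because the annulus width varies over the base, one cannot conclude directly from fiberwise vanishing: the paper regards $\MV([\X_1])$ as a motivic constructible function with structural maps $\theta=\val(z)+\val(w)$ and the ramification index, and uses the ring isomorphism $M$ of Theorem \ref{motivic:constructible:function:isomorphism} to assemble $\tau_n^{-1}(m)=0$ into $\theta^{-1}(n)=0$ and then into $\MV([\X_1])=0$ (Lemma \ref{lemma2:MVX1=0}). In your proposal Theorem \ref{motivic:constructible:function:isomorphism} is invoked only for rationality; its essential role --- turning the stratified, fiberwise vanishing into vanishing of the total class --- is the missing step, and it is also what replaces your ill-defined infinite-annulus limit.
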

\begin{proof}
The formal scheme 
$\hat{f}: \XX=\widehat{X}\to \spf(R)$
 is the $t$-adic formal completion of $f: \Ext^1(E,E)\to \aaa_{\kappa}^1$ along the origin.  Using Definition \ref{defn_Poincare_series}, Proposition \ref{prop:motivic:volume:result} and Theorem \ref{MV}, we have
 \begin{align*}
 \MV([\XX_{\eta}])&=\ll^{-d}\int_{\XX_0}\sS_{f}\\
 &=\ll^{-d}\int_{\XX_0} -\lim_{T\to \infty} \sS(\XX,\omega, T)\\
 &=-\lim_{T\to \infty}\sum_{n>0}\ll^{-nd}\int_{\XX_0}\XX_n(f)\\
 &=-\lim_{T\to \infty}\sum_{n>0}\ll^{-nd}([\X_0(n)]+[\X_1(n)])
 \end{align*}
where 
$$\X_1(n)=\{(x,y,z,w\in\aaa_{\kk}^{d, \an})| z\neq 0, w\neq 0\}.$$
As in \cite[\S 3]{Le22}, we have 
$$\X_1(n)=\XX_n(f)\cap (\lL_n(\aaa_{\kappa}^{d_1+d_2}\times \{0\}\times\aaa_{\kappa}^{d_{21}})\cup 
\lL_n(\aaa_{\kappa}^{d_1+d_2}\times\aaa_{\kappa}^{d_{12}}\times \{0\}))$$
where $\lL(\cdot)$ is the jet scheme. Now we prove $\MV([\X_1(n)])=0$.

This is from the argument as in \cite[\S 3]{Le22}.  Let $\X_1(n,m)$ be the set of 
$(x,y,z,w)$ in $\X_1(n)$ with $\ord_{t}(z)+\ord_{t}(w)=m$. This set is stable under the $\mu_n$-action.  Then using the theory of Cluckers-Loeser's motivic integration, $\mathbb{G}_{m,\kk}$ is an algebraic group and it acts on 
$$\sX:=\aaa_{\kk}^{d_1+d_2}\times_{\kk}\aaa_{\kk}^{d_{12}}\setminus \{0\}\times_{\kk}\aaa_{\kk}^{d_{21}}\setminus\{0\}$$
by 
$$\tau(x,y,z,w)=(x,y,\tau^{-1}z, \tau w).$$
Let $\yY$ be the quotient and $\phi: \sX\to\sY$ the projection.  Then $\phi$ induces a definable morphims 
$$h_{\phi}: h_{\sX}\to h_{\sY}$$
of definable subassignments in Cluckers-Loeser motivic integration theory.  In \cite[\S 3]{Le22}, Le found a surjective map 
$$\X_1(n,m)\to \widetilde{\X_1(n,m)}$$
with fibre over a point of residue field $k$ $\{\tau\in \lL(\aaa_{k}^1)| 0\leq \ord_{t}\tau <m\}$, such that 
$h_{\phi}(\X_1(n,m))=\widetilde{\X_1(n,m)}$.  Then Le proves that 
$$[\X_1(n,m)]=[\widetilde{\X_1(n,m)}]\cdot \ll^{n+1}\cdot (1-\ll^{-m})$$
which vanishes in $\mM_{\kappa, \loc}^{\hat{\mu}}$.
\end{proof}

\sss Now the proof of Formula (1) in Theorem \ref{main:thm} is obtained as follows:  From the formula $\sS_{\hat{f},0}=\ll^{d-1}\cdot \MV([\XX_\eta])$ in Theorem \ref{formula:SfX:SfY} and  $\MV([\X_1])=0$ in Theorem \ref{MVX1=0}, 
since
$\MV([\XX_\eta])=\MV([\X_0])+\MV([\X_1])$,  we have  $\sS_{\hat{f},0}=\ll^{d-1}\cdot \MV([\X_0])$. 
So by Theorem \ref{formula:SfX:SfY} again, 
$$\sS_{\hat{f},0}=\sS_{\hat{f}|_{Z},0}.$$

The potential function 
$$f|_{Z}=f_1+f_2, $$
where $f_1, f_2$ are the potential functions on $Z_1:=\Ext^1(E_1, E_1)$ and $Z_2:=\Ext^2(E_2, E_2)$. Hence 
$$\hat{f}|_{Z}:  \ZZ\to\spf(R)$$
can be split into $\hat{f}|_{Z}=\hat{f}_1+\hat{f}_2$ with 
$\hat{f}_i:  \ZZ_i\to\spf(R)$ the formal completion of $Z_i$ along the origin for $i=1,2$.  By motivic Thom-Sebastiani theorem proved in \cite{DeLo1} for regular functions, 
$$(1-\sS_{\hat{f}|_{Z},0})=(1-\sS_{\hat{f}_1,0})\cdot (1-\sS_{\hat{f}_2,0})$$
Note that $\sS_{0}(E)=\sS_{\hat{f},0}$, and $\sS_{0}(E_1)=\sS_{\hat{f}_1,0},  \sS_{0}(E_2)=\sS_{\hat{f}_2,0}$, we have
$$(1-\sS_{0}(E))=(1-\sS_{0}(E_1))\cdot (1-\sS_{0}(E_2)).$$

\subsection{The proof of  Formula (2) in Theorem \ref{main:thm}}\label{proof:formula2}

\sss We use the same notations as in \S (\ref{sec:notations}).  For the coherent sheaves or simple complexes 
$E=E_1\oplus E_2$, $E_1, E_2$,  let 
$$\hat{f}: \XX=\widehat{\Ext^{1}(E,E)}\to\spf(R)$$
be the special formal scheme as in \S (\ref{special:formal:scheme:E}). 
The formula  (2) in Theorem \ref{main:thm} is equivalent to the following formula:
\begin{align}\label{equivalent:formula:2}
& \int_{F\in\pp(\ext^1(E_2, E_1))} \sS_{0}(F)-\int_{F\in\pp(\ext^1(E_1, E_2))} \sS_{0}(F)  \\
&\nonumber =([\pp(\Ext^1(E_2, E_1))]-[\pp(\Ext^1(E_1, E_2))]) \cdot  \sS_{\hat{f}|_{Z},0}.
\end{align}

Recall that from \cite[Theorem 1.1]{Le}, \cite{Le2},  we have 
$$\int_{F\in\ext^1(E_2, E_1)} \sS_{0}(F)=\ll^{\dim\ext^1(E_2, E_1)}\sS_{\hat{f}|_{Z},0}$$
$$\int_{F\in\ext^1(E_1, E_2)} \sS_{0}(F)=\ll^{\dim\ext^1(E_1, E_2)}\sS_{\hat{f}|_{Z},0}$$
Hence formula (\ref{equivalent:formula:2}) is just from these two results. 
$\square$

%%%%%%%%%%%%%%%%%%%%%%%%%%

\appendix

\section{The  algebra homomorphism}\Label{Application:homo}

In this appendix we study the  algebra homomorphism from the motivic Hall algebra of the abelian category of coherent sheaves on the 
Calabi-Yau threefold $Y$ to the motivic quantum torus.

\subsection{Motivic Hall algebras}\label{sec:motivic:Hall}

 In this section we review the definition and construction of motivic Hall algebra of Joyce and Bridgeland in \cite{Joyce07}, \cite{Bridgeland10}. 
We define the Grothendieck ring of stacks of finite type. 
\begin{defn}
The Grothendieck ring of stacks  $K(\St/\kappa)$ is defined to be the $\kappa$-vector space spanned by isomorphism classes of Artin stacks of finite type over $\kappa$ with affine stabilizers, modulo the relations:
\begin{enumerate}
\item for every pair of stacks $\sX_1$ and $\sX_2$ a relation:
$$[\sX_1\sqcup\sX_2]=[\sX_1]+[\sX_2];$$
\item for any geometric bijection $f: \sX_1\to \sX_2$, $[\sX_1]=[\sX_2]$;
\item for any Zariski fibrations $p_i: \sX_i\to \sY$ with the same fibers, $[\sX_1]=[\sX_2]$.
\end{enumerate}
\end{defn}
Let $[\aaa^1]=\ll$, the Lefschetz motive.  If $S$ is a stack of finite type over $\kappa$, we define the relative Grothendieck ring of stacks $K(\St/S)$ as follows:

\begin{defn}
The relative Grothendieck ring of stacks  $K(\St/S)$ is defined to be the $\kappa$-vector space spanned by isomorphism classes of morphisms
$$[\sX\stackrel{f}{\rightarrow}S],$$
with $\sX$ an Artin stack over $S$ of finite type with affine stabilizers, modulo the following relations:
\begin{enumerate}
\item for every pair of stacks $\sX_1$ and $\sX_2$ a relation:
$$[\sX_1\sqcup\sX_2\stackrel{f_1\sqcup f_2}{\longrightarrow}S]=[\sX_1\stackrel{f_1}{\rightarrow}S]+[\sX_2\stackrel{f_2}{\rightarrow}S];$$
\item for any diagram:
\[
\xymatrix{
\sX_1\ar[rr]^{g}\ar[dr]_{f_1}&&\sX_2\ar[dl]^{f_2}\\
&S,&
}
\]
where $g$ is a 
geometric bijection, then $[\sX_1\stackrel{f_1}{\rightarrow}S]=[\sX_2\stackrel{f_2}{\rightarrow}S]$;
\item for any pair of Zariski fibrations 
$$\sX_1\stackrel{h_1}{\rightarrow} \sY;\quad \sX_2\stackrel{h_2}{\rightarrow} \sY$$
with the same fibers, and $g: \sY\to S$, a relation 
$$[\sX_1\stackrel{g\circ h_1}{\longrightarrow} S]=[\sX_2\stackrel{g\circ h_2}{\longrightarrow} S].$$
\end{enumerate}
\end{defn}

The motivic Hall algebra  in \cite{Bridgeland10} is defined as follows.
Let $\MM$ be the moduli stack of coherent sheaves on $Y$. It is an algebraic stack, locally of finite type over $\kappa$. The motivic Hall algebra is the vector space 
$$H(\A)=K(\St/\MM)$$
equipped with a non-commutative product given by the role:
$$[\sX_1\stackrel{f_1}{\longrightarrow} \MM]\star[\sX_2\stackrel{f_2}{\longrightarrow} \MM]=[\zZ\stackrel{b\circ h}{\longrightarrow} \MM],$$
where $h$ is defined by the following Cartesian square:
\[
\xymatrix{
\zZ\ar[r]^{h}\ar[d]&\MM^{(2)}\ar[r]^{b}\ar[d]^{(a_1,a_2)}&\MM \\
\sX_1\times\sX_2\ar[r]^{f_1\times f_2}& \MM\times \MM,&
}
\]
with $\mM^{(2)}$ the stack of short exact sequences in $\A$, and 
the maps $a_1, a_2, b$ send a short exact sequence
$$0\rightarrow A_1\longrightarrow B\longrightarrow A_2\rightarrow 0$$
to sheaves $A_1$, $A_2$, and $B$ respectively. Then $H(\A)$ is an algebra over 
$K(\St/\kappa)$. 

\subsection{Algebraic d-critical locus}\Label{sec:algebraic:d:critical:locus}
 
\sss We recall the algebraic (analytic) $d$-critical locus from \cite{Joyce}.  We mainly focus on the algebraic version, and  the analytic version can be  slightly modified from the  algebraic one. 

The algebraic $d$-critical locus is a classical model for the $-1$-shifted symplectic derived scheme as developed by PTVV in \cite{PTVV}. In the same paper \cite{PTVV}, PTVV prove that the moduli space of stable coherent sheaves or simple complexes over Calabi-Yau threefolds admits a 
$-1$-shifted symplectic derived structure, hence their underlying moduli  scheme has an algebraic $d$-critical locus structure. 
Thus the algebraic $d$-critical locus of Joyce provides the classical schematical framework for the moduli space of stable simple complex over smooth Calabi-Yau threefolds. 

\sss To define the algebraic $d$-critical locus, we first recall the following theorem in \cite{Joyce}:

\begin{thm}\label{property:local:data}(\cite{Joyce})
Let $X$ be a $\kappa$-scheme, which is locally of finite type. Then there exists a sheaf $\sS_{X}$ of $\kappa$-vector spaces on $X$, unique up to canonical isomorphism, which is uniquely characterized by the following two properties:

(i) Suppose that $R\subseteq X$ is Zariski open, $U$ is a smooth $\kappa$-scheme, and $i: R\hookrightarrow U$ is a closed embedding. Then there is an exact sequence of sheaves of $\kappa$-vector spaces on $R$:
$$0\rightarrow I_{R,U}\longrightarrow i^{-1}(\oO_{U})\stackrel{i^{\#}}{\longrightarrow}\oO_{X}|_{R}\rightarrow 0,$$
where $\oO_{X}, \oO_{U}$ are the structure sheaves of $X$ and $U$, and $i^{\#}$ is the morphism of sheaves over $R$. 
There is also an exact sequence of sheaves of $\kappa$-vector spaces over $R$:
$$0\rightarrow\sS_{X}|_{R}\stackrel{\iota_{R,U}}{\longrightarrow}\frac{i^{-1}(\oO_{U})}{I^{2}_{R,U}}\stackrel{d}{\longrightarrow}
\frac{i^{-1}(T^*U)}{I_{R,U}\cdot i^{-1}(T^*U)}$$
where $d$ maps $f+I_{R,U}^2$ to $df+I_{R,U}\cdot i^{-1}(T^*U)$. 

(ii) If $R\subseteq S\subseteq X$ are Zariski open, and $U, V$ are smooth $\kappa$-schemes, and 
$$i: R\hookrightarrow U$$
$$j: S\hookrightarrow V$$
are closed embeddings. Let 
$$\Phi: U\to V$$ be a morphism with $\Phi\circ i=j|_{R}: R\to V$.  Then the following diagram of sheaves on $R$ commutes:
\begin{equation}\label{diagram:Algebraic:Dcritical:Locus}
\begin{CD}
0 @ >>>\sS|_{R}@ >{\iota_{S,V}|_{R}}>> \frac{j^{-1}(\oO_{V})}{I^2_{S,V}}|_{R}@ >{d}>>
\frac{j^{-1}(T^*V)}{I_{S,V}\cdot j^{-1}(T^*V)}|_{R}@
>>> 0\\
&& @VV{\id}V@VV{i^{-1}(\Phi^{\#})}V@VV{i^{-1}(d\Phi)}V \\
0@ >>> \sS_{X}|_{R} @ >{\iota_{R,U}}>>\frac{i^{-1}(\oO_{U})}{I^2_{R,U}}@ >{d}>> \frac{i^{-1}(T^*U)}{I_{R,U}\cdot i^{-1}(T^*U)} @>>> 0.
\end{CD}
\end{equation}
Here $\Phi: U\to V$ induces
$$\Phi^{\#}: \Phi^{-1}(\oO_{V})\to\oO_{U}$$
on $U$, and we have:
\begin{equation}\label{map1}
i^{-1}(\Phi^{\#}): j^{-1}(\oO_{V})|_{R}=i^{-1}\circ\Phi^{-1}(\oO_{V})\to i^{-1}(\oO_{U}),
\end{equation}
a morphism of sheaves of $\kappa$-algebras on $R$.  As 
$\Phi\circ i=j|_{R}$, then (\ref{map1}) maps to $I_{S,V}|_{R}\to I_{R,U}$,  and 
$I^2_{S,V}|_{R}\to I_{R,U}^{2}$.  Thus (\ref{map1}) induces the morphism in the second column of (\ref{diagram:Algebraic:Dcritical:Locus}). Similarly,  $d\Phi: \Phi^{-1}(T^*V)\to T^*U$ induces the third column of 
(\ref{diagram:Algebraic:Dcritical:Locus}).
\end{thm}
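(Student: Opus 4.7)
\vspace{5pt}
\noindent\textbf{Proof proposal.} The plan is to construct $\sS_X$ as a sheaf built locally from the kernel of $d$ appearing in property (i), prove that this kernel is independent of the choice of local closed embedding into a smooth $\kappa$-scheme, and then glue. This is essentially the strategy of Joyce's original argument. Since $X$ is locally of finite type, every point has a Zariski open neighborhood $R \subseteq X$ admitting a closed embedding $i : R \hookrightarrow U$ into some smooth $\kappa$-scheme (e.g.~some $\aaa^n$). For each such datum $(R, U, i)$ I would define
$$\sS^{(R,U,i)}_X \;:=\; \ker\!\Bigl(\, d : \tfrac{i^{-1}(\oO_U)}{I^2_{R,U}} \longrightarrow \tfrac{i^{-1}(T^*U)}{I_{R,U}\cdot i^{-1}(T^*U)} \,\Bigr),$$
which is a sheaf of $\kappa$-vector spaces on $R$ sitting, by construction, in an exact sequence of the form required by (i).

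The core step is to show that $\sS^{(R,U,i)}_X$ is independent of the embedding, up to canonical isomorphism. Given two embeddings $i : R \hookrightarrow U$ and $i' : R \hookrightarrow U'$, I would compare them with the product embedding $(i,i') : R \hookrightarrow U \times U'$: the projections $\pi_U$ and $\pi_{U'}$ satisfy $\pi_U \circ (i,i') = i$ and $\pi_{U'} \circ (i,i') = i'$, so each fits the hypothesis of (ii). The induced pullbacks give arrows
$$\sS^{(R,U,i)}_X \longrightarrow \sS^{(R,U\times U',(i,i'))}_X \longleftarrow \sS^{(R,U',i')}_X,$$
and the key claim is that both arrows are isomorphisms. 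This uses smoothness: locally in $R$ one can choose \'etale coordinates on $U'$ near $i'(R)$ so that the ideal of $i'(R)$ is cut out by a regular sequence, and the extra coordinates from the $U'$ factor contribute generators to the middle term which are exactly matched, modulo $I_{R,U\times U'}\cdot i^{-1}(T^*)$, by their differentials in the right-hand term; hence they cancel out of the kernel of $d$. Composing the two isomorphisms yields a canonical isomorphism $\sS^{(R,U,i)}_X \cong \sS^{(R,U',i')}_X$.

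To produce a global sheaf on $X$ one covers $X$ by Zariski opens $R_\alpha$ with embeddings $i_\alpha : R_\alpha \hookrightarrow U_\alpha$ into smooth schemes, and glues the local candidates over double overlaps $R_\alpha \cap R_\beta$ using the canonical isomorphism attached to the product embedding $R_\alpha \cap R_\beta \hookrightarrow U_\alpha \times U_\beta$. The cocycle condition on triple overlaps follows by comparing all three local candidates with the single candidate on $R_\alpha \cap R_\beta \cap R_\gamma \hookrightarrow U_\alpha \times U_\beta \times U_\gamma$: by construction the three pairwise comparison isomorphisms factor through this common object and hence agree on triple overlaps. Property (ii) itself then follows from the universal property of the kernel, since the map $\Phi^\#$ in the middle column of (\ref{diagram:Algebraic:Dcritical:Locus}) intertwines the two $d$-operators and therefore automatically restricts to the identity on the canonically identified $\sS_X|_R$; uniqueness of $\sS_X$ up to canonical isomorphism is forced by (i) together with this compatibility.

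The hardest step will be the embedding-independence in the middle paragraph: one must verify that enlarging the smooth ambient scheme $U$ to $U \times U'$ does not enlarge the kernel of $d$, even though both $i^{-1}(\oO_U)/I^2_{R,U}$ and $(i,i')^{-1}(\oO_{U\times U'})/I^2_{R,U\times U'}$ are larger. Smoothness is essential here, since in general a closed embedding into a singular ambient scheme could produce spurious elements in the kernel. The cleanest way to carry this out is to reduce by \'etale-locality on $U$ and $U'$ to the model case $U'=\aaa^r$ with $i'$ the composition of the structure map $R\to\pt$ with the origin, and then compute directly in affine coordinates that the new variables and their differentials pair up and vanish from the kernel.
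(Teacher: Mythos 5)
The paper does not prove this theorem; it states it as Theorem~\ref{property:local:data} and refers the reader to \cite{Joyce} (Joyce, ``A classical model for derived critical locus,'' Theorem~2.1), so there is no in-paper argument to compare against. That said, your sketch is a faithful reconstruction of Joyce's strategy: define $\sS_X$ locally as $\ker(d)$, prove embedding-independence by stabilizing to a product $U\times U'$, glue over an open cover with the cocycle condition checked against the triple product, and read off property~(ii) because $\Phi^{\#}$ intertwines the two $d$'s.

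Two small points worth tightening. First, in the reduction step you speak of choosing coordinates ``so that the ideal of $i'(R)$ is cut out by a regular sequence''; that is not quite what is available, since $i'(R)\subset U'$ need not be a local complete intersection. What one actually uses is the \emph{relative} situation: after choosing \'etale coordinates $y_1,\dots,y_r$ on $U'$ near a point of $i'(R)$, extend $i'^{\#}(y_j)$ Zariski-locally to functions $\tilde\phi_j$ on $U$ and apply the shear $(u,y)\mapsto(u,\,y-\tilde\phi(u))$; this carries $(i,i')$ to the zero-section embedding $(i,0):R\hookrightarrow U\times\aaa^r$, so the ``extra'' ideal is $(y_1,\dots,y_r)$, a regular sequence over $U$. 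The direct computation then shows that any $f=f_0+\sum_j y_j g_j$ with $df\in I\cdot i^{-1}T^*(U\times\aaa^r)$ forces $g_j|_R=0$, whence $y_jg_j\in I^2$, so the kernel is unchanged. You should also record that $\ker(d)$ is \'etale-invariant in the ambient smooth scheme, which is what lets you pass to such local models. Second, the claim that property~(ii) is ``forced'' deserves a short diagram chase: the identification of $\sS^{(S,V,j)}|_R$ with $\sS^{(R,U,i)}$ built via the product embedding must be shown to agree with the map directly induced by $\Phi^{\#}$; this is true, but it is a verification, not automatic. With these adjustments the proposal matches Joyce's proof.
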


According to \cite{Joyce}, there is a natural decomposition 
$$\sS_{X}=\sS_{X}^0\oplus \kappa_{X}$$
and $\kappa_{X}$ is the constant sheaf on $X$ and $\sS_{X}\subset \sS_{X}$ is the kernel of the composition:
$$\sS_{X}\to\oO_{X}\stackrel{i_{X}^{\#}}{\longrightarrow}\oO_{X^{\red}}$$
with $X^{\red}$ the reduced $\kappa$-scheme of $X$, and $i_{X}: X^{\red}\hookrightarrow X$ the inclusion. 

\begin{defn}\label{algebraic:d:critical:locus}
An {\em algebraic d-critical locus} over the field $\kappa$ is a pair $(X,s)$, where $X$ is a $\kappa$-scheme, locally of finite type, and 
$s\in H^0(\sS_{X}^0)$ for $\sS_{X}^{0}$ in Theorem \ref{property:local:data}. These data satisfy the following conditions:
for any $x\in X$, there exists a Zariski open neighbourhood $R$ of $x$ in $X$, a smooth $\kappa$-scheme $U$, a regular function 
$f: U\to\kappa$, and a closed embedding $i: R\hookrightarrow U$, such that $i(R)=\Crit(f)$ as $\kappa$-subschemes of $U$, 
and $\iota_{R,U}(s|_{R})=i^{-1}(f)+I^2_{R,U}$.  We call the quadruple $(R,U,f,i)$ a {\em critical chart} on $(X,s)$. 
\end{defn}

\sss Some properties of $(X,s)$ are as follows:

\begin{thm}\cite{Joyce}
Let $(X,s)$ be an algebraic $d$-critical locus, and 
$(R,U,f,i), (S, V,g,j)$ be critical charts on $(X,s)$.  Then for each $x\in R\cap S\subset X$ there exists subcharts 
$$(R^\prime, U^\prime, f^\prime, i^\prime)\subseteq (R, U, f, i),$$
$$(S^\prime, V^\prime, g^\prime, j^\prime)\subseteq (S, V, g, j)$$
with $x\in R^\prime\cap S^\prime\subseteq X$, a critical chart $(T, W,h,k)$ on $(X,s)$, and embeddings
$$\Phi: (R^\prime, U^\prime, f^\prime, i^\prime)\hookrightarrow (T, W, h, k)$$ and 
$$\Psi: (S^\prime, V^\prime, g^\prime, j^\prime)\hookrightarrow (T, W, h, k).$$
\end{thm}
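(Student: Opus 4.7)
The plan is to construct a common critical chart $(T,W,h,k)$ that contains both given charts near $x$. First I would shrink $R$ and $S$ to a common Zariski open neighborhood $R_0$ of $x$ contained in $R\cap S$, and (further shrinking $U$ and $V$ if necessary) assume the embeddings $i,j$ still close-embed $R_0$ into $U$ and $V$ respectively. A naive first attempt is to take $W_0=U\times V$ with $k_0=(i,j)$ and $h_0=f\boxplus g$, but this fails because $\Crit(h_0)=\Crit(f)\times\Crit(g)$ is strictly larger than the diagonal image $k_0(R_0)$ in general.

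To correct this, the idea is to enlarge the ambient scheme by auxiliary affine factors and add a nondegenerate quadratic form in the transverse directions. Concretely, choose local coordinates on $U$ and $V$ near $i(x)$ and $j(x)$ adapted to the embeddings $i$ and $j$, use the normal-bundle data of $i(R_0)\subset U$ and $j(R_0)\subset V$ to identify canonically the transverse directions, and add a quadratic form $Q$ on $\aaa^N$ for $N$ suitably chosen, so that on $W:=U\times V\times\aaa^N$ the function $h:=f\boxplus g+Q$ has $\Crit(h)=k(R_0)$ for a canonical embedding $k\colon R_0\hookrightarrow W$. The embeddings $\Phi$ and $\Psi$ are then furnished by the inclusions of $U$ and $V$ as factors, composed with the zero sections of the auxiliary $\aaa^N$; by construction $\Phi^\#h=f$ and $\Psi^\#h=g$ (up to a quadratic term that vanishes on the relevant factor).

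The essential verification is that the canonical section $s\in H^0(\sS_X^0)$, restricted to $R_0$, equals $\iota_{R_0,W}^{-1}(k^{-1}(h)+I^2_{R_0,W})$ under the defining sequence in Theorem~2.1. This is exactly where the uniqueness and functoriality of $\sS_X$ enter: applying part (ii) of Theorem~2.1 to $\Phi\colon U\to W$ (with $\Phi\circ i=k|_{R_0}$) forces the classes $i^{-1}(f)+I^2_{R_0,U}$ and $k^{-1}(h)+I^2_{R_0,W}$ to represent the same section of $\sS_X^0$ on $R_0$; the symmetric assertion via $\Psi$ furnishes compatibility with $g$. Since by hypothesis $\iota_{R,U}(s|_R)=i^{-1}(f)+I^2_{R,U}$, we obtain $\iota_{R_0,W}(s|_{R_0})=k^{-1}(h)+I^2_{R_0,W}$, so $(T,W,h,k)$ with $T=R_0$ is a critical chart in the sense of Definition~\ref{algebraic:d:critical:locus}.

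The main obstacle is dual: controlling the critical locus of the chosen $h$ so that $\Crit(h)=k(T)$ and \emph{nothing more}, and simultaneously arranging that the pullbacks $\Phi^\#h$ and $\Psi^\#h$ genuinely recover $f$ and $g$ (perhaps after a coordinate change absorbed into the quadratic factor). The calibration of $N$ and the choice of $Q$ must be made using the Morse-type local form of $f$ and $g$ along $R_0$; the passage to genuine subcharts $(R^\prime,U^\prime,f^\prime,i^\prime)$ and $(S^\prime,V^\prime,g^\prime,j^\prime)$ is precisely what is needed so that these local-form constructions exist, since the required analytic (or \'etale/Henselian) adjustment of coordinates is only defined on a sufficiently small neighborhood of $x$. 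Once this local-form matching is achieved, the embeddings $\Phi,\Psi$ of critical charts are forced by the factor inclusions and the proof concludes.
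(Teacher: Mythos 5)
Your outline has the right shape: shrink to a common open $R_0$, build an over-arching critical chart $(T,W,h,k)$ receiving both charts, and verify the d-critical compatibility using the uniqueness and functoriality of $\sS_X$ (part (ii) of the preceding theorem). The final verification step is described correctly. But the central construction of $h$, which is where all the content of the theorem lies, does not work as written. If $Q$ is a nondegenerate quadratic form depending only on the auxiliary coordinates $z\in\aaa^N$, then $dh = df + dg + dQ$ is a decoupled differential, so
$$\Crit\big(f\boxplus g + Q\big) = \Crit(f)\times\Crit(g)\times\{0\} = i(R_0)\times j(R_0)\times\{0\},$$
which has exactly the same defect you rightly identified for the naive $f\boxplus g$: it is the full product, not the ``diagonal'' $k(R_0)$. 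Appending a nondegenerate quadratic form in \emph{independent} variables never cuts down the off-diagonal part of $\Crit(f)\times\Crit(g)$; it only tacks on a reduced-point factor. You acknowledge this obstacle in your last paragraph but do not resolve it, and it is precisely the step that carries the weight of the theorem.

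A correct construction must genuinely \emph{couple} the two embeddings $i$ and $j$. One first extends $j\circ i^{-1}\colon i(R_0)\to V$ to a morphism $\tilde j\colon U'\to V'$ between shrunken affine opens (using that $V'$ is affine and $i(R_0)\hookrightarrow U'$ is closed), then places $U'$ inside $W$ via the \emph{graph} $\Gamma_{\tilde j}\subset U'\times V'$, not as a product factor, and finally takes $h$ to be $f$ plus a nondegenerate quadratic form in Zariski-local coordinates complementary to $\Gamma_{\tilde j}$. The d-critical axiom $\iota_{R,U}(s|_R) = i^{-1}(f) + I^2_{R,U}$, combined with a Morse-lemma-style normal form for embeddings of critical charts proved separately in \cite{Joyce}, is then what ensures the residual difference $g - \Psi^{\#}h$ vanishes to second order along $j(R_0)$ and can be absorbed by a further coordinate change. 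None of this is automatic, and your phrase ``add a quadratic form $Q$ on $\aaa^N$ suitably chosen'' glosses over exactly this non-trivial local analysis. As stated, the proposal has a genuine gap at the step that constructs $h$ with $\Crit(h)=k(T)$.
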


\sss We introduce the canonical line bundle of $(X,s)$:

\begin{thm}\label{Canonical:line:bundle:Xs}\cite[Theorem 2.28]{Joyce}
Let $(X,s)$ be an algebraic $d$-critical locus, and $X^{\red}\subset X$ the associated reduced $\kappa$-scheme. 
Then there exists a line bundle $K_{X,s}$ on $X^{\red}$ which we call the {\em canonical Line bundle} of $(X,s)$, that is natural up to  canonical isomorphism, and is characterized by the following properties:

(i) If $(R,U,f,i)$ is a critical chart on $(X,s)$, there is a natural isomorphism 
$$\iota_{R,U,f,i}: (K_{X,s})|_{R^{\red}}\to i^*(K_{U}^{\otimes 2})|_{R^{\red}}$$
where $K_U$ is the canonical line bundle of $U$.

(ii) Let $\Phi: (R,U,f,i)\hookrightarrow  (S, V,g,j)$ be an embedding of critical charts on $(X,s)$. Then there is an 
isomorphism of line bundles on $\Crit(f)^{\red}$:
$$J_{\Phi}: (K_{U}^{\otimes 2})|_{\Crit(f)}\stackrel{\cong}{\longrightarrow} \Phi|_{\Crit(f)^{\red}}^{*}(K_{V}^{\otimes 2}).$$
Since $i: R\to \Crit(f)$ is an isomorphism as schemes with $\Phi\circ i=j|_{R}$, this gives 
$$i|_{R^{\red}}^{*}(J_{\Phi}): i^{*}(K_{U}^{\otimes 2})|_{R^{\red}}\stackrel{\cong}{\longrightarrow} j^{*}(K_{V}^{\otimes 2})|_{R^{\red}},
$$
and we have:
$$\iota_{S,V,g,j}|_{R^{\red}}=i|_{R^{\red}}^{*}(J_{\Phi})\circ \iota_{R,U,f,i}: (K_{X,s})|_{R^{\red}}
\to j^{*}(K_{V}^{\otimes 2})|_{R^{\red}}.$$
\end{thm}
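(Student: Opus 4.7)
The plan is to construct $K_{X,s}$ by gluing local candidates $L_{\alpha} := i^{*}(K_{U}^{\otimes 2})|_{R^{\red}}$ indexed by critical charts $\alpha = (R,U,f,i)$, with transition isomorphisms built from the transverse Hessian of the superpotential. Property (i) will be tautological by construction, and property (ii) will be the cocycle compatibility that makes the gluing work.

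Given an embedding of critical charts $\Phi : (R,U,f,i) \hookrightarrow (S,V,g,j)$, so that $\Phi : U \to V$ is a closed embedding of smooth schemes with $g \circ \Phi = f$ and $\Phi \circ i = j|_{R}$, I would construct $J_{\Phi}$ as follows. The conormal sequence $0 \to N_{U/V}^{\vee} \to \Phi^{*}\Omega_{V} \to \Omega_{U} \to 0$ gives $\Phi^{*} K_{V} \cong K_{U} \otimes (\det N_{U/V})^{-1}$, whence $\Phi^{*}(K_{V}^{\otimes 2}) \cong K_{U}^{\otimes 2} \otimes (\det N_{U/V})^{\otimes -2}$. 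The scheme-theoretic identifications $i(R) = \Crit(f)$, $j(S) = \Crit(g)$ together with $g \circ \Phi = f$ force $\Phi(\Crit(f)) = \Crit(g) \cap \Phi(U)$ scheme-theoretically; in local coordinates in which $U = \{x_{d+1} = \cdots = x_{n} = 0\} \subset V$, this is the statement that the normal partial derivatives $\partial_{k} g$ ($k > d$) generate the ideal of $\Phi(\Crit(f))$ in $\Crit(g)$, which by the Jacobian criterion forces the transverse Hessian $(\partial^{2} g/\partial x_{k} \partial x_{l})_{k,l > d}$ to be non-degenerate along $\Crit(f)^{\red}$. This transverse Hessian defines an intrinsic non-degenerate symmetric form $Q_{\Phi}$ on $N_{U/V}|_{\Crit(f)^{\red}}$, whose determinant canonically trivializes $(\det N_{U/V})^{\otimes 2}|_{\Crit(f)^{\red}}$. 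Composing trivialization and conormal isomorphism yields $J_{\Phi}$.

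I would then check two compatibilities. First, canonicity of $Q_{\Phi}$, hence of $J_{\Phi}$: it is the intrinsic second fundamental form of $g$ transverse to $\Phi(U)$ at critical points, so depends only on the morphism $\Phi$, not on any local splitting. Second, a cocycle law for compositions $\Psi \circ \Phi$: on $\Crit(f)^{\red}$ the normal bundle splits as $N_{U/W} \cong N_{U/V} \oplus \Phi^{*} N_{V/W}$, and the transverse Hessian of the target superpotential decomposes block-diagonally in this splitting, whose determinants multiply to give $J_{\Psi \circ \Phi} = \Phi^{*}(J_{\Psi}) \circ J_{\Phi}$. To globalize, the stabilization theorem cited immediately before the statement lets me embed any two charts $\alpha, \beta$ near $x \in R_{\alpha} \cap R_{\beta}$ into a common chart $\gamma$, giving $L_{\alpha} \cong L_{\gamma} \cong L_{\beta}$ on the overlap via $J_{\alpha \to \gamma}$ and $J_{\beta \to \gamma}^{-1}$. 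The cocycle law makes this overlap isomorphism independent of $\gamma$, so the $L_{\alpha}$ assemble into a line bundle $K_{X,s}$ on $X^{\red}$ equipped with the asserted $\iota_{R,U,f,i}$, and property (ii) is the definition.

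The main obstacle is the non-degeneracy of the transverse Hessian in the construction of $J_{\Phi}$: it requires unpacking that the scheme-theoretic equality $i(R) = \Crit(f)$ transported along $\Phi$ forces the normal differentials $\partial_{k} g$ ($k > d$) to cut out $\Phi(\Crit(f))$ inside $\Crit(g)$ as a regular sequence, which is where all the geometric content of an embedding of critical charts is used. Once this is in place, the canonicity of $Q_{\Phi}$ and the block-diagonal decomposition underlying the cocycle identity are formal consequences of the orthogonal decomposition of the Hessian, and the global gluing is a standard descent argument.
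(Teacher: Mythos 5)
First, note that the paper does not prove this statement at all: it is quoted verbatim from Joyce (his Theorem 2.28), so the only meaningful comparison is with Joyce's construction, and your outline does follow that construction's architecture (glue $i^*(K_U^{\otimes 2})|_{R^{\red}}$ over charts, with transition isomorphisms $J_\Phi$ obtained by trivializing $(\det N_{U/V})^{\otimes 2}$ along $\Crit(f)^{\red}$ via a nondegenerate quadratic form coming from the Hessian of $g$ in the normal directions, then descend using the stabilization theorem).

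However, the step you yourself identify as the crux contains a genuine error. The matrix $(\partial^2 g/\partial x_k\partial x_l)_{k,l>d}$ is \emph{not} intrinsic on $N_{U/V}|_{\Crit(f)^{\red}}$ (it changes under coordinate changes preserving $U$, because the full Hessian need not vanish on $T_pU$), and it need not be nondegenerate at points of $\Crit(f)^{\red}$ even for a legitimate embedding of critical charts. Concretely, take $U=\{y=0\}\subset V=\mathbb{A}^2$, $f(x)=x^2$, $g(x,y)=x^2+xy$: then $\Crit(f)$ and $\Crit(g)$ are both the reduced origin, both charts induce the same (trivial) $d$-critical structure on the point, and $f=g|_U$, so $\Phi$ is an embedding of critical charts, yet $\partial^2 g/\partial y^2=0$ there. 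Your justification is also confused at the ideal-theoretic level: $\Phi(\Crit(f))=j(R)$ is an \emph{open} subscheme of $\Crit(g)$, so its ideal in $\Crit(g)$ is locally zero; what the scheme-theoretic identification actually gives is that the ideal of $\Phi(U)$ lies in the ideal generated by all partials of $g$ near $j(R)$, equivalently $T_p\Crit(g)=\ker\mathrm{Hess}_p(g)\subseteq T_pU$ with $\ker\mathrm{Hess}_p(g)$ equal to the radical of $\mathrm{Hess}_p(f)$. That equality of kernels is the correct input, and from it one can build an intrinsic nondegenerate form on $N_{U/V}|_p$ (e.g.\ by restricting $\mathrm{Hess}_p(g)$ to the Hessian-orthocomplement of $T_pU$ and transporting along the induced isomorphism with $T_pV/T_pU$, or, as Joyce and Brav--Bussi--Dupont--Joyce--Szendr\H{o}i do, by first proving an \'etale-local splitting $g\sim f\boxplus q$ with $q$ a nondegenerate quadratic form on the normal directions). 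Without this corrected construction, both the canonicity of $Q_\Phi$ and the block-diagonal cocycle identity $J_{\Psi\circ\Phi}=\Phi^*(J_\Psi)\circ J_\Phi$ (which with the naive normal Hessian would also fail because of cross terms) are unsupported, and with them the entire gluing. One should also check that the corrected fiberwise form varies algebraically along $\Crit(f)^{\red}$ (the rank of $\mathrm{Hess}(f)$ jumps, so the orthocomplements do not form a bundle), a point your sketch does not address.
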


\sss  We talk about the orientation data for $d$-critical locus in \cite{Joyce}:

\begin{defn}\label{oriented:d:critical:locus}
Let $(X,s)$ be an algebraic $d$-critical locus, and $K_{X,s}$ the canonical line bundle of $(X,s)$. 
An {\em orientation} on $(X,s)$ is a choice of square root line bundle $K_{X,s}^{1/2}$ for $K_{X,s}$ on $X^{\red}$. 
I.e., an orientation of $(X,s)$ is a line bundle $L$ over $X^{\red}$ and an isomorphism $L^{\otimes 2}=L\otimes L\cong K_{X,s}$. 
A $d$-critical locus with an orientation will be called an oriented $d$-critical locus.
\end{defn}

Bussi, Brav and Joyce \cite{BBJ} prove the following interesting result:  Let $(\bX, \omega)$ be a $(-1)$-shifted symplectic derived scheme over $\kappa$ in the sense of \cite{PTVV}, and let $X:=\bt_0(\bX)$ be the associated classical $\kappa$-scheme of 
$\bX$. Then $X$ naturally extends to an algebraic $d$-critical locus $(X,s)$. The canonical line bundle $K_{X,s}\cong \det(\ll_{\bX})|_{X^{\red}}$ is the determinant line bundle of the cotangent complex $\ll_{\bX}$ of $\bX$.

\sss One of the applications of the $(-1)$-shifted symplectic derived scheme or stack is on moduli problems.  Let $Y$ be a smooth Calabi-Yau threefold over $\kappa$, and $X$ a classical moduli scheme of simple coherent sheaves in $\Coh(Y)$, the abelian category of coherent sheaves on $Y$.  Then in \cite{PTVV}, the authors prove that there is a natural $(-1)$-shifted derived scheme structure 
$\mathbf{X}$ on the moduli space $X$, such that if 
$$i: X\hookrightarrow \mathbf{X}$$
is the inclusion, then the pullback $i^{*}\ll_{\mathbf{X}}$ of the cotangent complex of $\mathbf{X}$ is a perfect obstruction theory of 
$X$, 
thus from the result in \cite{BBJ}, $X$ has an algebraic $d$-critical locus structure. 

Actually similar story holds for derived Artin stacks. Still let $Y$ be a smooth Calabi-Yau threefold over $\kappa$, and $\mM$ a moduli stack of simple coherent sheaves in $\Coh(Y)$.  Then in \cite{PTVV},  there is a natural $(-1)$-shifted derived stack structure 
$\bmM$ on the moduli space $\mM$, such that if 
$$i: \mM\hookrightarrow \bmM$$
is the inclusion, then the pullback $i^{*}\ll_{\bmM}$ of the cotangent complex of $\bmM$ is an obstruction theory of 
$\mM$, 
thus from the result in \cite{BBBJ}, $\mM$ has an algebraic $d$-critical stack structure. 

\sss The orientation of  $d$-critical locus or stacks has application to motivic Donaldson-Thomas theory. 
Let $X:=(X,s)$ be an algebraic $d$-critical locus, which is the moduli scheme of simple coherent sheaves or simple complexes over a smooth Calabi-Yau threefold $Y$.  
Let $(R,U,f,i)$ be a critical chart of $(X,s)$. Then in \cite{BJM}, the authors associated with this local chart a perverse sheaf of vanishing cycle
\begin{equation}\label{local:vanishing:cycle}
\sS_{U,f}^{\phi}\in \mM_{X}^{\hat{\mu}}
\end{equation}
such that 
$$\sS_{U,f}^{\phi}|_{X_c}=\ll^{-\dim(U)/2}\odot ([U_c, \hat{\iota}]-\sS_{U, f-c})|_{X_{c}}, $$
where $f: X\to\kappa$ is the function $f$ restricted to $X$, and 
$X=\sqcup_{c\inf(X)}X_c$ and $X_c=X\cap U_{c}$ with $U_{c}=f^{-1}(c)\in U$.  We call 
$\sS_{U,f}^{\phi}$ the {\em motivic vanishing cycle} of $f$. 

\sss As in \cite{BJM}, a principal $\zz_2$-bundle $P\to X$ is a proper, surjective, \'etale morphism of $\kappa$-schemes 
$\pi: P\to X$ together with a free involution $\sigma:P\to P$ such that the orbits of $\zz_2$ are the fibers of $\pi$. 

Let $\zz_2(X)$ be the abelian group of isomorphism classes $[P]$ of principal $\zz_2$-bundles $P\to X$, with multiplication
$[P]\cdot [Q]=[P\otimes_{\zz_2}Q]$ and the identity the trivial bundle $[X\times \zz_2]$.  We know that $P\otimes_{\zz_2}P\cong X\times\zz_2$, so every element in $\zz_2(X)$ has order $1$ or $2$. 

In \cite{BJM}, the authors define the motive of a principal $\zz_2$-bundle $P\to X$ by:
$$\Upsilon(P)=\ll^{-\frac{1}{2}}\odot([X,\hat{\iota}]-[P,\hat{\rho}])\in\mM_{X}^{\hat{\mu}},$$
where $\hat{\rho}$ is the $\hat{\mu}$-action on $P$ induced by the $\mu_2$-action on $P$. 

In \cite{BJM}, for any scheme $Y$, the authors define an ideal $I_{Y}^{\hat{\mu}}$ in $\mM_{Y}^{\hat{\mu}}$ which is generated by
$$\phi_{*}(\Upsilon(P\otimes_{\zz_2}Q)-\Upsilon(P)\odot\Upsilon(Q))$$
for all morphisms $\phi: X\to Y$ and principal $\zz_2$-bundles 
$P, Q$ over $X$.  Then define 
$$\overline{\mM}_{Y}^{\hat{\mu}}=\mM_{Y}^{\hat{\mu}}/I_{Y}^{\hat{\mu}}.$$
Then $(\overline{\mM}_{Y}^{\hat{\mu}},\odot)$ is a commutative ring with $\odot$ and there is a natural 
projection $\prod_{Y}^{\hat{\mu}}: \mM_{Y}^{\hat{\mu}}\to \overline{\mM}_{Y}^{\hat{\mu}}$.

\sss \label{sec:include:quadratic:form} Let $(X,s)$ be an oriented   $d$-critical locus. Recall the isomorphism in the canonical line bundle 
$K_{X,s}$ in Theorem \ref{Canonical:line:bundle:Xs}.  Let 
$Q_{R,U,f,i}\to R$ be the principal $\zz_2$-bundle parameterizing local isomorphisms
$$\alpha: K_{X,s}^{1/2}|_{R^{\red}}\to i^{*}(K_{U})|_{R^{\red}}$$
with $\alpha\otimes\alpha=\iota_{R,U,f,i}$, where 
$$\iota_{R,U,f,i}: K_{X,s}|_{R^{\red}}\to i^{*}(K_{U}^{\otimes 2})|_{R^{\red}}$$ is the isomorphism in  Theorem \ref{Canonical:line:bundle:Xs}.

\begin{thm}(\cite{BJM})\label{thm:global:motives}
If $(X,s)$ is a finite type algebraic $d$-critical locus with a choice of orientation $K_{X,s}^{1/2}$, then there exists a unique motive 
$$\sS_{X,s}^{\phi}\in\overline{\mM}_{X}^{\hat{\mu}}$$
such that 
if $(R,U,f,i)$ is a critical chart on $(X,s)$, then 
$$\sS_{X,s}^{\phi}|_{R}=i^{*}(\sS_{U,f}^{\phi})\odot \Upsilon(Q_{R,U,f,i})\in \overline{\mM}_{R}^{\hat{\mu}}$$
where 
$$\Upsilon(Q_{R,U,f,i})=\ll^{-1/2}\odot([R.\hat{\iota}]-[Q, \hat{\rho}])\in \mM_{R}^{\hat{\mu}}$$
is the motive of the principal $\zz_2$-bundle defined in 
\S 2.5 of \cite{BJM}. 
\end{thm}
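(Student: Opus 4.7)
The plan is to construct $\sS_{X,s}^{\phi}$ by first defining a local motive on every critical chart, then showing that two charts produce compatible motives over any common open subscheme, and finally gluing by Zariski descent in $\overline{\mM}_{X}^{\hat{\mu}}$. Uniqueness will be automatic from the characterizing property, since the critical charts cover $X$ and the formula determines the restriction to each one.

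First, for each critical chart $(R,U,f,i)$ on $(X,s)$, I would set
$$\sS_{R,U,f,i}^{\phi} := i^{*}(\sS_{U,f}^{\phi}) \odot \Upsilon(Q_{R,U,f,i}) \in \overline{\mM}_{R}^{\hat{\mu}},$$
where $\sS_{U,f}^{\phi}$ is the motivic vanishing cycle \eqref{local:vanishing:cycle} and $Q_{R,U,f,i}$ is the principal $\zz_2$-bundle of local square roots of $\iota_{R,U,f,i}$ introduced in \S (\ref{sec:include:quadratic:form}). The heart of the argument is then to prove the following compatibility: for every embedding of critical charts $\Phi\colon (R,U,f,i)\hookrightarrow (S,V,g,j)$ with $R\subseteq S$, one has
$$\sS_{S,V,g,j}^{\phi}\big|_{R} \;=\; \sS_{R,U,f,i}^{\phi} \quad\text{in } \overline{\mM}_{R}^{\hat{\mu}}.$$

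To prove this compatibility I would use the local normal form for embeddings of critical charts from \cite{Joyce}: after shrinking, there is a splitting of the normal bundle $N_{U/V}$ on which $g$ restricts, up to a change of \'etale coordinates, to $f \boxplus q_{V/U}$ where $q_{V/U}$ is a non-degenerate quadratic form. By the motivic Thom-Sebastiani theorem (\cite{DeLo1} in the regular case, and \cite{Le3} in the formal case which I use in \S \ref{proof:formula1}), one obtains
$$j^{*}(\sS_{V,g}^{\phi}) \;=\; i^{*}(\sS_{U,f}^{\phi}) \odot \sS_{q_{V/U}}^{\phi},$$
after the $\ll^{-\dim/2}$-normalization built into $\sS^{\phi}$. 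On the other hand, comparing the canonical bundles through $\Phi$ via Theorem \ref{Canonical:line:bundle:Xs} gives an isomorphism $K_{V}^{\otimes 2}|_{\Crit(f)} \cong K_{U}^{\otimes 2}|_{\Crit(f)} \otimes \det(N_{U/V})^{\otimes 2}$, and the square-root orientation identifies the principal $\zz_2$-bundle $Q_{S,V,g,j}|_{R}$ with $Q_{R,U,f,i} \otimes_{\zz_2} Q_{q_{V/U}}$, where $Q_{q_{V/U}}$ is the $\zz_2$-bundle of square roots of $\det N_{U/V}$ determined by $q_{V/U}$. The key point is that the passage from $\mM_{R}^{\hat{\mu}}$ to $\overline{\mM}_{R}^{\hat{\mu}}$ precisely kills the discrepancy $\Upsilon(Q_1 \otimes_{\zz_2} Q_2) - \Upsilon(Q_1)\odot\Upsilon(Q_2)$, so combined with the identity $\sS_{q}^{\phi} = \Upsilon(Q_{q})$ relating motivic vanishing cycles of quadratic forms to their orientation bundles, the two expressions coincide in $\overline{\mM}_{R}^{\hat{\mu}}$.

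For arbitrary pairs of charts $(R,U,f,i)$ and $(S,V,g,j)$ overlapping at $x\in R\cap S$, I would invoke the stabilization theorem of Joyce: after shrinking around $x$, both charts embed into a common third critical chart $(T,W,h,k)$. Applying the compatibility above twice yields agreement of the local motives on $R^{\prime}\cap S^{\prime}$. Since such stabilizations can be carried out in a Zariski neighborhood of every point, the family $\{\sS_{R,U,f,i}^{\phi}\}$ satisfies the Zariski cocycle condition in $\overline{\mM}^{\hat{\mu}}$, hence glues to a unique global class $\sS_{X,s}^{\phi}\in\overline{\mM}_{X}^{\hat{\mu}}$. Uniqueness of the glued class follows from the sheaf property of relative Grothendieck rings and the defining compatibility. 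The main obstacle is the second paragraph: verifying the Thom-Sebastiani matching together with the orientation calculation in $\overline{\mM}$, because one must check that the $\ll^{\pm 1/2}$-twists, the $\hat{\mu}$-equivariance of the quadratic-form motive, and the $\zz_2$-bundle bookkeeping all line up --- this is exactly where the quotient defining $\overline{\mM}^{\hat{\mu}}$ is used in an essential way.
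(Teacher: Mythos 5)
This theorem is not proved in the paper at all: it is quoted from \cite{BJM}, and the paper uses it as a black box. So there is no in-paper proof to compare against. That said, your sketch is a faithful reconstruction of the argument that Bussi--Joyce--Meinhardt actually give: define the local motive $i^{*}(\sS_{U,f}^{\phi}) \odot \Upsilon(Q_{R,U,f,i})$ on each critical chart; for an embedding of charts $\Phi\colon (R,U,f,i)\hookrightarrow(S,V,g,j)$, use Joyce's local normal form $g \sim f \boxplus q_{V/U}$ together with the motivic Thom--Sebastiani theorem and the key computation $\sS_{q}^{\phi} = \Upsilon(Q_{q})$ for a nondegenerate quadratic form $q$ to match the two local motives, with the $\odot$-multiplicativity of $\Upsilon$ holding precisely because one has passed to $\overline{\mM}^{\hat{\mu}}$; then reduce arbitrary overlaps to embeddings by Joyce's stabilization theorem. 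That is the BJM proof in outline.

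The one place where you should tighten the language is the final descent step. The rings $\overline{\mM}_{X}^{\hat{\mu}}$ are not sheaves on the Zariski site, so ``the sheaf property of relative Grothendieck rings'' is not something you can invoke off the shelf. What BJM actually prove is an ad hoc gluing statement: given a finite Zariski cover by critical charts with local classes agreeing on double overlaps, one chooses a finite stratification of $X$ into locally closed pieces each contained in a single chart, defines the global class as the sum of the pushforwards of the corresponding restrictions, and then verifies independence of the stratification (and of the cover) using the pairwise compatibility you established. This gives both existence and uniqueness without appealing to any genuine sheaf condition. Apart from that wording, your proposal captures the content of the cited result and correctly identifies all the ingredients, including the essential role of the quotient defining $\overline{\mM}^{\hat{\mu}}$ in making the $\zz_{2}$-bundle bookkeeping work.
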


\begin{rmk}
\begin{enumerate}
\item On the Donaldson-Thomas moduli scheme $X$ over the Calabi-Yau threefold $Y$, the Behrend function 
$$\nu_{X}: X\to\zz$$
is a constructible function defined using the local Euler obstruction of the canonical cycle of $X$ defined in \S 2 of \cite{Behrend}.  In \cite{Behrend}, Behrend proves 
$$\chi(X,\nu_{X})=\int_{[X]^{\virt}}1$$
if $X$ is a proper scheme, where $[X]^{\virt}$ is the virtual fundamental class of $X$ defined by the perfect obstruction theory.  This is the Donaldson-Thomas invariant. 

\item Let $(X,s)$ be the corresponding algebraic $d$-critical locus of the moduli scheme $X$. If $(X,s)$ is oriented, i.e. there exists a global square root
$K_{X,s}^{1/2}$, then there exists 
$\sS_{X}^{\phi}\in \overline{\mM}_{X}^{\hat{\mu}}$ such that 
$$\chi(X, \sS_{X}^{\hat{\mu}})=\chi(X,\nu_{X}),$$
thus categorifying the  Donaldson-Thomas invariant. 

\item The orientation data $K_{X,s}^{1/2}$ and the triangle property of the motives of the quadratic forms 
$Q$ were introduced by Kontsevich-Soibelman in \cite{KS} in the more general setting of the motivic Donaldson-Thomas invariants.  Several cases of the square root line bundle have been proved in \cite{Davison}, \cite{KL}, \cite{Hua}.
\end{enumerate}
\end{rmk}

\sss In order to define the  algebra homomorphism from the motivic Hall algebra to the motivic quantum torus, we need to modify the global motive 
$\sS_{X}^{\phi}$.

Let us now fix the moduli stack $\MM$ as the stack of coherent sheaves on the abelian category of coherent sheaves $\A$ of $Y$. From \cite{Bridgeland10}, \cite{JS}, $\MM$ is an Artin stack, locally of finite type. 

\begin{lem}
Let $(X,s)$ be a finite type  algebraic $d$-critical locus, which is the moduli scheme of stable coherent sheaves over 
$Y$, or the coarse moduli scheme of moduli of semi-stable coherent sheaves on $Y$, then $(X,s)$ is the coarse moduli scheme of an Artin stack $\sX$ of finite type, which is the underlying Artin stack of a 
$(-1)$-shifted derived Artin stack $\bmX$ in sense of \cite{BBBJ}.
\end{lem}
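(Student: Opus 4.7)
The plan is to build $\bmX$ as an open derived substack of the derived moduli stack of the entire abelian category $\A=\Coh(Y)$ and invoke PTVV together with BBBJ. Concretely, let $\bmM$ denote the derived moduli stack of (universally gluable) perfect complexes on the Calabi-Yau threefold $Y$, whose underlying classical Artin stack is $\MM$. By the main result of \cite{PTVV}, the Calabi-Yau structure on $Y$ endows $\bmM$ with a canonical $(-1)$-shifted symplectic form $\omega$. Openness of stability in families yields an open derived substack $\bmX\hookrightarrow \bmM$ parametrising (semi)stable sheaves of the chosen numerical class, and pulling back $\omega$ gives $\bmX$ a $(-1)$-shifted symplectic structure. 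Let $\sX:=\bt_0(\bmX)$ be the underlying classical Artin stack; then $\sX$ is an open substack of $\MM$.

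Next I would verify finite-type and coarse moduli claims. Boundedness of the family of (semi)stable sheaves with fixed numerical invariants, together with the constructibility of $\sX$ inside $\MM$, shows that $\sX$ is of finite type over $\kappa$; this is a standard consequence of Grothendieck's boundedness results. In the stable case, $\sX$ is a $\Gm$-gerbe over the moduli scheme $X$ (stability cuts stabilisers down to scalars), and $X$ is manifestly the coarse moduli of $\sX$. In the semistable case, I would take the GIT (or Keel-Mori) coarse moduli of $\sX$ and identify it with $X$; this is again standard, relying on the construction of the coarse moduli scheme of semistable sheaves via geometric invariant theory.

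Finally, I would promote the $(-1)$-shifted symplectic derived stack structure on $\bmX$ to the desired $d$-critical data. By the derived Darboux theorem for stacks in \cite{BBBJ}, the underlying classical stack $\sX$ carries a canonical $d$-critical stack structure $(\sX,s_\sX)$, and this structure descends to the coarse moduli scheme $X$ to yield an algebraic $d$-critical structure that one checks coincides with $(X,s)$. The descent step uses the fact that $K_{\bmX}$ is $\Gm$- (resp.\ reductive-) equivariant along the gerbe (resp.\ GIT) projection $\sX\to X$, so the Zariski-local critical charts on $\sX$ descend to critical charts on $X$ compatibly with the section $s$ constructed in \cite{BBJ}.

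The main obstacle is precisely this last descent/identification step: one must verify that the canonical $d$-critical data extracted from $\bmX$ via BBBJ agrees, as a section of $\sS^0$, with the $d$-critical structure $s$ put on $X$ (which is usually the one coming from local superpotentials $f_E$ attached to points $E\in X$). Everything else reduces to known openness, boundedness and GIT statements, so the technical heart is matching the two a priori different constructions of $s$. One standard way is to use that the formal germ of $X$ at a point $E$ represents the same formal critical locus as the formal superpotential $\hat{f}_E$ on $\widehat{\Ext^1(E,E)}$, which is exactly the compatibility already exploited in the proofs of Theorem~\ref{main:thm}.
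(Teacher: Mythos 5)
Your proposal follows essentially the same route as the paper: the paper's own proof simply observes that $(X,s)$ is the coarse moduli space of the moduli stack $\sX$ with fixed topological data inside $\MM$ and cites \cite{PTVV} and \cite{BBBJ} for the canonical $(-1)$-shifted derived Artin stack enhancement $\bmX$. Your extra steps (openness of stability, boundedness, the gerbe/GIT identification of the coarse moduli, and the matching of the $d$-critical section $s$ with the BBBJ structure) are elaborations of details the paper leaves implicit, and the final identification you flag as the main obstacle is not strictly demanded by the statement of the lemma itself.
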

\begin{proof}
We consider the algebraic $d$-critical locus $(X,s)\subset \MM$ such that 
it is the coarse moduli space of a moduli stack $\sX$ with fixed topological data. 
From \cite{PTVV} and \cite{BBBJ}, $\sX$ can be extended to a canonical $(-1)$-shifted derived Artin stack structure 
$\bmX$. 
\end{proof}

\sss  On the Artin stack $\sX$, in \cite{BBBJ}, the authors define an algebraic $d$-critical stack structure $(\sX,s)$ on $\sX$, similar to Definition \ref{algebraic:d:critical:locus}.   An oriented algebraic $d$-critical stack is the one  $(\sX,s)$ such that  there exists a global square root line bundle
$K_{\sX,s}^{1/2}$. 
Let $t: X\to\sX$ be a morphism from an algebraic $d$-critical locus to the algebraic $d$-critical stack $(\sX,s)$, then in Theorem 5.14 of \cite{BBBJ}, there exists a 
$$\sS_{\sX}^{\phi}\in \overline{\mM}_{\sX,\loc}^{\hat{\mu}}$$
such that
$$t^{*}\sS_{\sX}^{\phi}=\ll^{n/2}\odot\sS_{X}^{\phi}\in \overline{\mM}_{X,\loc}^{\hat{\mu}},$$
where $n$ is the relative dimension of the morphism $t$.

\begin{rmk}
\'Etale locally if the algebraic $d$-critical stack  $(\sX,s)$ is given by the quotient stack 
$$[Q/E],$$
then $Q$ is an algebraic $d$-critical locus, and the morphism $t$ is given by:
$$t: Q\to [Q/E]$$
and 
$$\sS_{[Q/E]}^{\phi}=\ll^{\dim(E)/2}\odot\sS_{Q}^{\phi}$$
with $\sS_{Q}^{\phi}$ the motivic vanishing cycle sheaf. 
\end{rmk}

\subsection{The integration map}\label{motivic:integration:map}

\sss In this section we define the integration map from the motivic Hall algebra to the motivic quantum torus. 

\sss
Recall that in \S 3 of  \cite{Bridgeland10}, there exists maps of commutative rings:
$$K(\Sch/\kappa)\to K(\Sch/\kappa)[\ll^{-1}]\to K(\St/\kappa),$$
where $K(\Sch/\cc)$ is the Grothendieck  ring of schemes of finite type over $\kappa$. 
Since $H(\A)$ is an algebra over $K(\St/\kappa)$, define a $K(\Sch/\kappa)[\ll^{-1}]$-module
$$H_{\reg}(\A)\subset H(\A)$$
to be the span of classes of maps $[X\stackrel{f}{\rightarrow}\MM]$ with $X$ a scheme.  An element of $H(\A)$ is regular if it lies in $H_{\reg}(\A)$. Then from Theorem 5.1 of \cite{Bridgeland10}, the Hall algebra product preserves the regular elements in $H_{\reg}(\A)$. 

\sss For our purpose, we define a $K(\Sch/\kappa)[\ll^{-1}]$-module
$$H_{d-\Crit}(\A)\subset H(\A)$$
to be the span of classes of maps $[X\stackrel{f}{\rightarrow} \MM]$
with $(X,s)$ an ``oriented"  algebraic $d$-critical locus in the sense of Joyce \cite{Joyce}, reviewed in \S \ref{sec:algebraic:d:critical:locus}.  Since $X$ is a scheme, the module 
$$H_{d-\Crit}(\A)\subset H_{reg}(\A). $$
The following is a generalization of Theorem  5.1 of \cite{Bridgeland10}:

\begin{thm}\label{Thm:preserve:algebraic:Dcritical:submodule}
The sub-module of $d$-critical elements of $H(\A)$ is closed under the convolution product:
$$H_{d-\Crit}(\A)\star H_{d-\Crit}(\A)\subset H_{d-\Crit}(\A)$$
and is a $K(\Sch/\kappa)[\ll^{-1}]$-algebra. Moreover, the quotient 
$$H_{\ssc,d-\Crit}(\A)=H_{d-\Crit}(\A)/(\ll-1)H_{d-\Crit}(\A)$$
is a commutative $K(\Sch/\kappa)$-algebra.
\end{thm}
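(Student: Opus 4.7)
The plan is to adapt the proof of Theorem~5.1 of \cite{Bridgeland10}, which establishes the same statements for the regular submodule $H_{\reg}(\A)$, and to enhance it with the additional $d$-critical data. Since every algebraic $d$-critical locus is a scheme, we have the inclusion $H_{d-\Crit}(\A)\subset H_{\reg}(\A)$, so Bridgeland's argument already guarantees that, given $[X_1\stackrel{f_1}{\to}\MM]$ and $[X_2\stackrel{f_2}{\to}\MM]$ in $H_{d-\Crit}(\A)$, the underlying stack $Z=(X_1\times X_2)\times_{\MM\times\MM}\MM^{(2)}$ of the product is a $\kappa$-scheme of finite type. The real new content is to endow $Z$ with a canonical algebraic $d$-critical structure and to control the commutator modulo $\ll-1$.

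First step: constructing the $d$-critical structure on $Z$. Fix a $\kappa$-point $(E_1,E_2)\in X_1\times X_2$ with extension $E$ fitting in $0\to E_1\to E\to E_2\to 0$, and choose critical charts $(R_i,U_i,g_i,\iota_i)$ on $(X_i,s_i)$ coming from the cyclic $L_\infty$-superpotentials $f_i$ on $\Ext^1(E_i,E_i)$. Using the decomposition
\[
\Ext^1(E,E)=\Ext^1(E_1,E_1)\oplus\Ext^1(E_2,E_2)\oplus\Ext^1(E_1,E_2)\oplus\Ext^1(E_2,E_1),
\]
and the fact (reviewed in \S\ref{proof:conjecture}) that the restriction of $f_E$ to the first two summands splits as $f_1+f_2$, the fibre of $\MM^{(2)}\to\MM\times\MM$ contributes the affine factor $\Ext^1(E_2,E_1)$ on which $f_E$ restricts trivially in the unobstructed direction. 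Combining the charts, $Z$ is \'etale-locally the critical locus of
\[
f_E\bigr|_{\Ext^1(E_1,E_1)\oplus\Ext^1(E_2,E_2)\oplus\Ext^1(E_2,E_1)}.
\]
These local critical charts glue by Joyce's uniqueness theorem (Theorem~\ref{property:local:data}) to a canonical section $s_Z\in H^0(\sS_Z^0)$. A conceptually cleaner justification is that $\bmM^{(2)}$ is a Lagrangian correspondence between the PTVV $(-1)$-shifted symplectic stacks $\bmM\times\bmM$ and $\bmM$, so the derived pullback $\bZ$ is again $(-1)$-shifted symplectic, and Brav-Bussi-Joyce~\cite{BBJ} yields a $d$-critical structure on its classical truncation agreeing with the local description above. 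This shows $[Z\to\MM]\in H_{d-\Crit}(\A)$, proving closure under $\star$ and the algebra structure over $K(\Sch/\kappa)[\ll^{-1}]$.

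Second step: commutativity of the quotient. Following Bridgeland, stratify $Z_{12}:=X_1\star X_2$ and $Z_{21}:=X_2\star X_1$ by the isomorphism class of the middle term. Over a stratum where the middle term has class $[E]$ with $E_1$, $E_2$ its subobject and quotient (in either order), the fibre of $Z_{ij}\to X_1\times X_2$ is the affine quotient $\Ext^1(E_j,E_i)/\Hom(E_j,E_i)$. Separating the split and nonsplit loci and using $[\aaa^n]=\ll^n$, the difference $[Z_{12}]-[Z_{21}]\in H_{d-\Crit}(\A)$ is represented stratum-wise by
\[
([\pp(\Ext^1(E_2,E_1))]-[\pp(\Ext^1(E_1,E_2))])\cdot [\text{split locus}],
\]
exactly as in \cite{Bridgeland10} and parallel to Formula~(2) of Conjecture~\ref{Con_Joyce}. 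Since $[\pp^n]=(\ll^{n+1}-1)/(\ll-1)$, each such class is divisible by $\ll-1$, so the commutator lies in $(\ll-1)H_{d-\Crit}(\A)$. Hence $H_{\ssc,d-\Crit}(\A)$ is commutative.

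The main obstacle will be the gluing in the first step: verifying that the critical charts on $Z$ built from different overlapping critical charts of $X_1$, $X_2$ and the \'etale local models of $\MM^{(2)}$ produce the \emph{same} section $s_Z\in H^0(\sS_Z^0)$. This rests on the compatibility of the cyclic $L_\infty$-transfer for $\Ext^*(E,E)$ with the $\oplus$-decomposition above, which ensures the local potentials patch up to the equivalence relation generated by stabilization by quadratic forms; the derived-geometric route via \cite{BBJ} bypasses this but requires the stronger input of a Lagrangian correspondence structure on $\bmM^{(2)}$.
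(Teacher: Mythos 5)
Your first step is in the same spirit as the paper's proof, but its starting point contains an error that matters for how $H_{d-\Crit}(\A)$ is defined. The fibre product $Z=(X_1\times X_2)\times_{\MM\times\MM}\MM^{(2)}$ is \emph{not} a scheme: its fibre over a point $(E_1,E_2)$ is the quotient stack $[\Ext^1(E_2,E_1)/\Hom(E_2,E_1)]$, so $Z$ is an Artin stack with affine stabilizers. Bridgeland's Theorem 5.1 does not assert scheme-ness of $Z$; it proceeds by stratifying $X_1\times X_2$ into pieces $S_j$ on which the relevant $\Hom$ and $\Ext$ dimensions are constant, identifying $\zZ_j\cong[Q_j/\kappa^{d_0(S_j)}]$ with $Q_j$ the total space of the $\Ext^1(E_2,E_1)$-bundle over $S_j$, and concluding $a_1\star a_2=\sum_j\ll^{-d_0(S_j)}[Q_j\to\MM]$ in $K(\St/\MM)$. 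Since $H_{d-\Crit}(\A)$ is by definition spanned by classes of \emph{schemes} carrying a d-critical structure, endowing the stack $Z$ with a d-critical (stack) structure does not by itself prove closure under $\star$; you must pass through this stratification and then show that each scheme $Q_j$ is an algebraic d-critical locus. That is exactly what the paper does, and at that point your two justifications (a local superpotential, using that $f_E$ restricted to $\Ext^1(E_1,E_1)\oplus\Ext^1(E_2,E_2)\oplus\Ext^1(E_2,E_1)$ is $f_1+f_2+0$, or a derived enhancement) match the paper's two, equally brief, justifications — except that the paper cites \cite[Corollary 5.17]{BBBJ} for the derived moduli stack of objects of the form $E_1\oplus E_2$, whereas you assert an unproved Lagrangian-correspondence structure on $\bmM^{(2)}$ and a pullback argument whose shifted-symplectic bookkeeping you do not check; the chart-gluing issue you flag is real, and is not resolved by your sketch (nor, admittedly, belabored by the paper).

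The second step has a genuine error in the bookkeeping. Your stratum-wise formula for the commutator is not correct, and the divisibility argument fails: $[\pp^n]-[\pp^m]\equiv n-m \pmod{\ll-1}$, so a difference of projectivizations is in general \emph{not} divisible by $\ll-1$. The actual mechanism, in Bridgeland and in the paper, is different: splitting each $Q_j$ into its zero section and complement gives $a_1\star a_2=\sum_j\ll^{-d_0(S_j)}\bigl([S_j\to\MM]+(\ll-1)[\pp(Q_j)\to\MM]\bigr)$, hence modulo $(\ll-1)H_{d-\Crit}(\A)$ one gets $a_1\star a_2\equiv\sum_j[S_j\to\MM]=[X_1\times X_2\to\MM]$ (the map being direct sum), which is manifestly symmetric in $a_1,a_2$; equivalently, the commutator equals $\sum_j(\ll^{-d_0(S_j)}-\ll^{-d_3(S_j)})[S_j\to\MM]$ plus $(\ll-1)$ times explicit d-critical classes, and the divisibility comes from $\ll^{-d_0}-\ll^{-d_3}$ and from the visible factor $(\ll-1)$, not from $[\pp(Q_j)]-[\pp(\hat{Q}_j)]$. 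Your formula also drops the differing prefactors $\ll^{-\dim\Hom(E_2,E_1)}$ versus $\ll^{-\dim\Hom(E_1,E_2)}$, which are precisely what produce the $[S_j]$-term in the semiclassical bracket. Finally, Formula (2) of Conjecture \ref{Con_Joyce} is not needed for this theorem; it enters only later, in the proof that the integration map respects the Poisson bracket (Theorem \ref{main:Poisson:homomorphism}).
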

\begin{proof}
The proof is similar to Theorem  5.1 of \cite{Bridgeland10}. Let 
$$a_i=[X_i\stackrel{f_i}{\rightarrow}\MM]\in H_{d-\Crit}(\A); i=1,2$$
with both $X_1$ and $X_2$ are algebraic $d$-critical loci. Let $E_i$ be the family of coherent sheaves on $X_i$ corresponding to the map $f_i$. As in the proof of Theorem  5.1 of \cite{Bridgeland10}, stratify $X_1\times X_2$ by locally closed sub varieties $S_j$, we have the following diagram:
\[
\xymatrix{
\zZ_j\ar[r]\ar[d]_{t_j}&\zZ\ar[d]_{t}\ar[r]^{q}&\MM^{(2)}\ar[r]^{h}\ar[d]_{(a_1,a_2)}&\MM \\
S_j\ar[r]& X_1\times X_2\ar[r]^{f_1\times f_2}&\MM\times\MM, &
}
\]
where $\zZ_j$ is the fiber product.  Let
$$V^{k}(x_1,x_2)=\Ext^{k}_{\MM}(E_2|_{\{x_2\}\times\MM}, E_1|_{\{x_1\}\times\MM}); \quad (x_1, x_2)\in S$$
and $d_k(S_j)=\dim(\Ext^k_{\MM})$. Then from \S 7.1 of \cite{Bridgeland10}, 
$$\zZ_j\cong [Q_j/\kappa^{d_0(S_j)}],$$
where $Q_j=V^1(S_j)$ is the total space of the trivial vector bundle over $S_j$ with fiber $V^{1}(x_1, x_2)$ over 
$(x_1, x_2)\in S_j$, and 
$$a_1\star a_2=[\zZ\stackrel{b\circ h}{\rightarrow}\MM]=\sum_{j}\ll^{-d_{0}(S_j)}[Q_j\stackrel{g_j}{\rightarrow}\MM].$$
So for us we only need to prove that $Q_j$ is an algebraic $d$-critical locus. Since $Q_j$ is the trivial vector bundle 
$S_j\times\kappa^{d_1(S_j)}$, by assuming that $S_j$ is a strata such that $d_1(S_j)$ is constant, the algebraic $d$-critical structure on $S_j$ comes as follows:  around a point $(x_1, x_2)\in S_j$, there exists an algebraic function 
$$f_{E_1\oplus E_2}: \Ext^1(E_1\oplus E_2, E_1\oplus E_2)\to\kappa$$
such that $(x_1, x_2)\in \Crit(f_{E_1\oplus E_2})$.  Or we can use \cite[Corollary 5.17]{BBBJ} to argue that 
$\zZ_j=[Q_j/\kappa^{d_0(S_j)}]$ is a canonical truncation $\bt_0(\bmX)$ for a $(-1)$-shifted Artin stack 
$\bmX$, where $\bmX$ is the derived moduli stack of coherent sheaves of the form $E_1\oplus E_2$. 

The second statement is the same as in Theorem 5.1 of \cite{Bridgeland10}, since split $Q_j$ into the 
zero-section and the complement, we can write 
\begin{equation}\label{Hall:algebra:product1}
a_1\star a_2=\sum_{j}\ll^{-d_{0}(S_j)}\left([S_j\stackrel{k}{\rightarrow}\MM]+(\ll-1)[\pp(Q_j)\stackrel{g_j}{\rightarrow}\MM]\right)
\end{equation} 
and 
\begin{equation}\label{Hall:algebra:product2}
a_1\star a_2=\sum_{j}\ll^{-d_{0}(S_j)}[S_j\stackrel{k}{\rightarrow}\MM]=[X_1\times X_2\stackrel{k}{\rightarrow}\MM] \mod (\ll-1).
\end{equation}
So we are done. 
\end{proof}

\sss \label{Poisson:bracket:Hall:algebra} The algebra $H_{\ssc,d-\Crit}(\A)$ is called semi-classical Hall algebra for the elements of $d$-critical locus. In \cite{Bridgeland10}, Bridgeland also defines a Poisson bracket on $H(\A)$ by:
$$\{f, g\}=\frac{f\star g-g\star f}{\ll-1}.$$
This bracket preserves the subalgebra $H_{d-\Crit}(\A)$.

\sss  We define the motivic quantum torus. 
Let $K(Y)=K(\A)$ be the Grothendieck group of the category $\A$.  Let  $E, F\in k(\A)$ and let 
$$\chi(E,F)=\sum_{i}(-1)^{i}\dim_{\kappa}\Ext^i(E,F).$$
So $\chi(-,-)$ is a bilinear form on $K(\A)$, which is called the Euler form.  The numerical Grothendieck group is the quotient
$$N(Y)=K(Y)/K(Y)^{\perp},$$
where $K(Y)^{\perp}$ means the Euler form zero subgroup. 
Let $\Gamma\subset N(Y)$ denote the monoid of  effective classes, which is to say the classes of the form $[E]$ with $E$ a sheaf. 

\begin{rmk}
The stack $\MM$ split into disjoint union of open and closed substacks
$$\MM=\bigsqcup_{\alpha\in\Gamma}\MM_{\alpha}$$
where $\MM_{\alpha}\subset\MM$ is the stack of objects of class
$\alpha\in\Gamma$.  And 
$\MM_{\alpha}\subset \MM$ implies that $K(\St/\MM_{\alpha})\subset K(\St/\MM)$.

Also the Hall algebra
$$H(\A)=\bigoplus_{\alpha\in\Gamma}H(\A)_{\alpha}$$
and $H(\A)$ is a graded algebra with respect to the convolution product. 
\end{rmk}

\sss \label{Poisson:bracket:motivic:quantum:torus} Let $\mM_{\kappa}^{\hat{\mu}}$ be the ring of motives and consider
$$\mM_{\kappa, \loc}^{\hat{\mu}}=\mM_{\kappa}^{\hat{\mu}}[\ll^{-1}, \ll^{-1/2}, (\ll^i-1)^{-1}, i\in\nn_{\geq 0}].$$
Let $\overline{\mM}_{\kappa, \loc}^{\hat{\mu}}$ be the ring with the product $\odot$. 
\begin{defn}
Define
$$\overline{\mM}_{\kappa, \loc}^{\hat{\mu}}[\widehat{\Gamma}]=\bigoplus_{\alpha\in \Gamma}\overline{\mM}_{\kappa, \loc}^{\hat{\mu}}\cdot x^{\alpha}$$
to be the ring generated by symbols $x^\alpha$ for $\alpha\in \Gamma$, with product defined by:
$$x^\alpha\star x^{\beta}=\ll^{\frac{1}{2}\cdot\chi(\alpha,\beta)}\cdot x^{\alpha+\beta}.$$
Even the Euler form is skew-symmetric, this ring is not commutative due to the factor of power of the Lefschetz motive. 
\end{defn}

\begin{rmk}
One can try to define the Poisson bracket on 
$\overline{\mM}_{\kappa, \loc}^{\hat{\mu}}[\widehat{\Gamma}]$.
For classes $\alpha, \beta\in \Gamma$, we define 
$$\Ext_{E,F}^i(\alpha,\beta)=\Ext^i(E,F)$$
for $[E]=\alpha, [F]=\beta$, where $E, F\in \A$. 

Since different representatives $E^\prime, F^\prime$ of $\alpha,\beta$ may have different Extension groups, 
let $e^i:=\dim(\Ext^i(E,F))$ we calculate:
\begin{align*}
&\frac{\sum_{i=0}^{3}(-1)^{i+1}\ll^{\dim\ext_{E,F}^{i}(\alpha,\beta)}}{\ll-1} \\
&=\frac{-(\ll^{\chi(\alpha,\beta)}-1)}{\ll-1}\cdot 
\frac{\ll^{e^1}(\ll^{e^0-e^1-1}+\cdots+1)+\ll^{e^3}(\ll^{e^2-e^3-1}+\cdots+1)}
{\ll^{\chi(\alpha,\beta)-1}+\cdots+\ll+1} \\
&:=\frac{\ll^{\chi(\alpha,\beta)}-1}{\ll-1}\cdot  \mbox{Term}_{E,F}
\end{align*}
for any $E, F\in \A$ such that $[E]=\alpha$, $[F]=\beta$.
Let 
$$\overline{\mM}_{\kappa, \loc}^{\hat{\mu}}[\Gamma]=\overline{\mM}_{\kappa, \loc}^{\hat{\mu}}[\widehat{\Gamma}]_{\mbox{\tiny Term}_{E,F}}$$
be the localization ring on all such terms $\mbox{Term}_{E,F}$.  But the localization of noncommutative ring is very subtle.  

Then we use $\Ext^i(\alpha,\beta)$ to represent the extension group for any representatives. 
The Poisson bracket is given by:
\begin{align*}
&\{x^{\alpha}, x^{\beta}\}=\ll^{\frac{1}{2}\cdot\chi(\alpha,\beta)}\odot
\frac{\sum_{i=0}^{3}(-1)^{i+1}\ll^{\dim\ext^{i}(\alpha,\beta)}}{\ll-1} \cdot x^{\alpha+\beta}\\
&=\ll^{\frac{1}{2}\cdot\chi(\alpha,\beta)}\odot
\frac{\ll^{\chi(\alpha,\beta)}-1}{\ll-1} \cdot x^{\alpha+\beta}
\end{align*}
over $\overline{\mM}_{\kappa, \loc}^{\hat{\mu}}[\Gamma]$. 
\end{rmk}

We define the integration map. 
Let 
\begin{equation}\label{integration:map}
I: H_{\ssc, d-\Crit}(\A)\to \overline{\mM}_{\kappa, \loc}^{\hat{\mu}}[\Gamma]
\end{equation}
be the map defined by:
for any element 
$[Z\to\MM]\in H_{\ssc, d-\Crit}(\A)$, let 
$$t:Z\to\zZ$$
be the map from the oriented algebraic $d$-critical locus $Z$ to the corresponding oriented  $d$-critical Artin stack $\zZ$. Then 
$$I([Z\to\MM])=\left(\int t^{*}\sS_{\zZ}^{\phi}\right)\cdot x^{\alpha}\in \overline{\mM}_{\kappa, \loc}^{\hat{\mu}}[\Gamma]$$
where $\int: \overline{\mM}_{Z, \loc}^{\hat{\mu}}\to \overline{\mM}_{\kappa, \loc}^{\hat{\mu}}$ is the pushforward  of motives. 

\begin{rmk}
Let $\nu_{\zZ}$ be the Behrend function on $\zZ$ which is the pullback $i^*\nu_{\MM}$ from $i: \zZ\to\MM$.  Then taking cohomology of the perverse sheaf $t^{*}\sS_{\zZ}^{\phi}$ we get the weighted Euler characteristic 
$\chi(Z, t^*\nu_{\zZ})$.  This is the map $I$ in \cite[Theorem 5.2]{Bridgeland10}.
\end{rmk}

\begin{thm}\label{main:homomorphism}
The map $I$ in (\ref{integration:map}) is an algebra homomorphism. 
\end{thm}

\subsection{The proof of Theorem \ref{main:homomorphism}}\label{proof:homomorphism}

\sss For each algebraic $d$-critical locus $(Z,s)$ such that $Z$ factors through 
$[Z\stackrel{f}{\rightarrow}\MM_{\alpha}]$, the perverse sheaf $\sS^{\phi}_{Z}$ of vanishing cycles is constructible.  hence there exists a stratification $Z=\cup_{i}Z_i$ such that 
$\sS_{Z}^{\phi}|_{Z_i}$ is given by the vanishing cycle of the function 
$$f: U\to\kappa,$$
where we can take $Z_i$ fits into a critical chart $Z_i, U, f,i$. So $I$ is well-defined. 

\sss From Serre duality,
$$V^k(x_1, x_2)=V^{3-k}(x_2, x_1)^*. $$
Let $\hat{Q}_j=V^2(S_j)$ be the bundle over $S_j$ whose fiber at $(x_1,x_2)$ is 
$V^1(x_2, x_1)$.  Let 
$$g_j: Q_j\to\MM; \quad \hat{g}_j: \hat{Q}_j\to\MM$$
be the induced morphisms induced by taking the universal extensions. 
For 
$$\alpha_1=[X_1\stackrel{f_1}{\rightarrow}\MM_{\alpha_1}],$$
$$\alpha_2=[X_1\stackrel{f_2}{\rightarrow}\MM_{\alpha_2}],$$
we have:
\begin{equation}\label{Iai}
I(a_i)=\left(\int t_i^{*}\sS_{\sX_i}^{\phi}\right)\cdot x^{\alpha_i}\in \overline{\mM}_{\kappa, \loc}^{\hat{\mu}}[\Gamma],
\end{equation}
where 
$$t_i: X_i\to\sX_i$$
are the smooth morphisms from the $d$-critical loci to the corresponding $d$-critical Artin stacks for $i=1,2$. 

From the expression of $\alpha_1\star \alpha_2$ in (\ref{Hall:algebra:product2}) in the proof of Theorem \ref{Thm:preserve:algebraic:Dcritical:submodule}, 
\begin{equation}\label{Ia1stara2}
I(\alpha_1\star \alpha_2)=\left(\int\sum_{j}t_j^{*}\sS_{\sS_j}^{\phi}\right)\cdot x^{\alpha_1+\alpha_2}
=\left(\int t^{*}\sS_{\sX_1\times\sX_2}^{\phi}\right)\cdot x^{\alpha_1+\alpha_2},
\end{equation}
where $t_j:  S_j\to \sS_{j}$ is the morphism from the $d$-critical locus scheme to the $d$-critical stack
$\sS_{j}$, and so is the morphism
$$t:=t_1\times t_2: X_1\times X_2\to\sX_1\times \sX_2.$$ 
From \cite{BBBJ}, we have:
$$t^{*}\sS_{\sX_1\times\sX_2}^{\phi}=\ll^{n/2}\odot \sS_{X_1\times X_2}^{\phi}=
\ll^{n/2}\cdot \sS_{U,f}^{\phi}\odot\Upsilon(Q_{X,U,f,i}),$$
where $(X=X_1\times X_2, U, f,i)$ is the local critical  chart of $X_1\times X_2$, 
$$\sS_{U,f}^{\phi}=\ll^{-\dim(U)/2}(\mathds{1}_{U}-\sS_{U,f}),$$
 $n$ is the relative dimension of the smooth morphism 
$t$, and $\Upsilon(Q_{X,U,f,i})$ is the motive of a quadratic form, parameterizing the local isomorphism of the canonical line bundles as in 
\S (\ref{sec:include:quadratic:form}). 

Over a point $(x_1, x_2)\in X_1\times X_2\subset U$, the dimension 
$\dim(U)=\dim\Ext^1(E_1\oplus E_2, E_1\oplus E_2)$ and
$n=\dim\Ext^0(E_1\oplus E_2, E_1\oplus E_2)$, where $E_1, E_2$ are the  coherent sheaves corresponding
to $x_1, x_2$ respectively. 
When restricted to $X_1\times X_2\subset U$, the quadratic form 
$Q_{X,U,f,i}$ split into the product 
$$Q_{X, U, f,i}=Q_{X_1, U_1, f_1, i_1}\otimes Q_{X_2, U_2, f_2, i_2}$$
and 
$$\Upsilon(Q_{X,U,f,i})=\Upsilon(Q_{X_1,U_1,f_1,i_1})\odot \Upsilon(Q_{X_2,U_2,f_2,i_2})\in \overline{\mM}_{X_1\times X_2}^{\hat{\mu}}.$$
Here $(X_1, U_1, f_1,i_1)$ and $(X_2, U_2, f_2,i_2)$ are the critical charts of 
$x_1\in X_1$ and $x_2\in X_2$ respectively. 
For  coherent sheaves $E_1, E_2$, the first formula in Theorem \ref{main:thm} is:
$$(1-\sS_{0}(E_1\oplus E_2))=(1-\sS_{0}(E_1))\cdot (1-\sS_{0}(E_1)).$$
And this formula of motivic Milnor fibers holds for every point on $X_1\times X_2$. 
Hence we calculate (let $E:=E_1\oplus E_2$):
\begin{align*}
\ll^{n/2}\odot\sS_{X_1\times X_2}^{\phi}&=\ll^{\dim\ext^0(E,E)/2}\cdot\ll^{-\dim\ext^1(E,E)/2}\odot (\mathds{1}-\sS_{U,f})\cdot  \Upsilon(Q_{X,U,f,i}) \\
&=\ll^{\chi(E_1,E_2)/2}\odot \ll^{\dim\ext^0(E_1,E_1)/2}\ll^{-\dim\ext^1(E_1,E_1)/2}(\mathds{1}-\sS_{U_1,f_1})\cdot \\
&\ll^{\dim\ext^0(E_2,E_2)/2}\ll^{-\dim\ext^1(E_2,E_2)/2}(\mathds{1}-\sS_{U_2,f_2})\cdot \Upsilon(Q_{X_1,U_1,f_1,i_1})\odot \Upsilon(Q_{X_2,U_2,f_2,i_2})\\
&=\ll^{\chi(E_1,E_2)/2}\odot \ll^{\dim\ext^0(E_1,E_1)/2}\cdot \sS_{X_1}^{\phi}\cdot \ll^{\dim\ext^0(E_2,E_2)/2}\cdot \sS_{X_2}^{\phi}\\
&=\ll^{\chi(E_1,E_2)/2}\cdot t_1^{*}\sS_{\sX_1}^{\phi}\cdot  t_2^{*}\sS_{\sX_2}^{\phi}.
\end{align*}
So
\begin{equation}\label{motivic:cycle:split}
t^{*}\sS_{\sX_1\times\sX_2}^{\phi}=\ll^{n/2}\odot\sS_{X_1\times X_2}^{\phi}=\ll^{\chi(\alpha_1,\alpha_2)/2}\cdot t_1^{*}\sS_{\sX_1}^{\phi}\cdot  t_2^{*}\sS_{\sX_2}^{\phi}.
\end{equation} 
Comparing the formulas in (\ref{Iai}), (\ref{Ia1stara2}) and (\ref{motivic:cycle:split}), we have
$$I(\alpha_1\star \alpha_2)=I(\alpha_1)\star I(\alpha_2).$$
So the integration map $I$ is an algebra homomorphism.  $\square$

%%%----------------------------------------------------------------------
%%%----------------------------------------------------------------------

\subsection*{}

% ------------------------------------------------------------------------
\end{document}